\newtheorem{Satz}{Theorem}[section]
\newtheorem{Theorem}[Satz]{Theorem}
\newtheorem{Lemma}[Satz]{Lemma}		
\newtheorem{Korollar}[Satz]{Corollary}	
\newtheorem{Prop}[Satz]{Proposition}	
\numberwithin{equation}{section}
\theoremstyle{definition}
\newtheorem{Definition}[Satz]{Definition}
\newtheorem{Remark}[Satz]{Remark}
\newtheorem{Assumption}[Satz]{Assumption}
\newtheorem{Notation}[Satz]{Notation}
\newcommand{\BIGOP}[1]{\mathop{\mathchoice%
		{\raise-0.22em\hbox{\huge $#1$}}%
		{\raise-0.05em\hbox{\Large $#1$}}{\hbox{\large $#1$}}{#1}}}
\newcommand{\BIGboxplus}{\mathop{\mathchoice%
		{\raise-0.35em\hbox{\huge $\boxplus$}}%
		{\raise-0.15em\hbox{\Large $\boxplus$}}{\hbox{\large $\boxplus$}}{\boxplus}}}
\newcommand{\C}{\mathbb{C}} 
\newcommand{\R}{\mathbb{R}} 
\newcommand{\Rd}{{\mathbb{R}^d}} 
\newcommand{\N}{\mathbb{N}} 
\newcommand{\E}{\mathbb{E}}
\newcommand{\Prob}{\mathbb{P}}
\newcommand{\F}{\mathcal{F}}
\newcommand{\Filtration}{\mathbb{F}}
\newcommand{\Etilde}{\tilde{\E}}
\newcommand{\HS}{\operatorname{HS}}
\newcommand{\Yosida}{R_\lambda}
\newcommand{\mass}{\mathcal{M}}
\newcommand{\Lzwei}{L^2(M)}
\newcommand{\Heins}{{H^1(M)}}
\newcommand{\Hminuseins}{{H^{-1}(M)}}
\newcommand{\Hminusdrei}{{H^{-3}(M)}}
\newcommand{\SchwachStetigHeins}{{C_w([0,T],\Heins)}}
\newcommand{\LInfty}{{L^\infty(M)}}
\newcommand{\LocTwo}[2]{{\psi_{#1,#2}(-h^2\Delta_g)}}
\newcommand{\Loc}{\varphi(h^2\Delta_g)}
\newcommand{\LocK}{\varphi(2^{-k}\Delta_g)}
\newcommand{\LocTilde}{\tilde{\varphi}(h^2\Delta_g)}
\newcommand{\groupMinusS}{e^{-\im  s\Delta_g}}
\newcommand{\group}{e^{\im  t\Delta_g}}
\newcommand{\groupTS}{e^{\im  (t-s)\Delta_g}}
\newcommand{\realTest}{C_c^\infty(\R)}
\newcommand{\realTestNoNull}{C_c^\infty(\R \setminus \{0\})}
\newcommand{\supp}{\operatorname{supp}}
\newcommand{\JendpointSpace}{{L^2(J,L^6)}}
\newcommand{\IjStrichendpointSpace}{{L^2(I_j',L^6)}}
\newcommand{\JjStrichendpointSpace}{{L^2(J_j',L^6)}}
\newcommand{\JendpointSpaceGamma}{{\gamma(Y,L^2(J,L^6))}}
\newcommand{\df }{\mathrm{d}}
\newcommand{\im }{\mathrm{i}}
\newcommand{\sumM }{\sum_{m=1}^{\infty}}
\newcommand{\Real}{\operatorname{Re}}
\newcommand{\skpLzwei}[2]{\big(#1,#2\big)_{L^2}}
\newcommand{\skpLzweiMf}[2]{\big(#1,#2\big)_{L^2}}
\newcommand{\norm}[1]{\Vert #1 \Vert}
\newcommand{\bigNorm}[1]{\left\Vert #1 \right\Vert}
\newcommand{\Addresses}{{
		\bigskip
		\footnotesize
		
		Z.~Brze{\'{z}}niak, Department of Mathematics, University of York,
			Heslington, York, YO105DD, UK\par\nopagebreak
		\textit{E-mail address}: zdzislaw.brzezniak@york.ac.uk
		\medskip
		
		F.~Hornung, Institute for Analysis, Karlsruhe Institute for Technology (KIT), 76128 Karlsruhe, Germany\par\nopagebreak
		\textit{E-mail address}: fabian.hornung@kit.edu
		
		\medskip
		
		L.~Weis, Institute for Analysis, Karlsruhe Institute for Technology (KIT), 76128 Karlsruhe, Germany\par\nopagebreak
		\textit{E-mail address}: lutz.weis@kit.edu
		
	}}
\title[Uniqueness of  solutions for the stochastic 3d NLS]{Uniqueness of martingale solutions for the stochastic nonlinear Schr\"odinger equation on 3d compact manifolds}
\author{Zdzislaw BRZE{\'Z}NIAK, Fabian HORNUNG AND Lutz WEIS
	}
\date{\today}
\begin{document}

\begin{abstract}
	We prove pathwise uniqueness for solutions of the nonlinear Schr\"{o}dinger equation with conservative multiplicative noise on compact 3D manifolds. In particular, we generalize the result by Burq, G\'erard and Tzvetkov, \cite{Burq}, to the stochastic setting. The proof is based on deterministic and stochastic Strichartz estimates and the Littlewood-Paley decomposition.
\end{abstract}

\maketitle



%
%
%
%
%
%
%


\textbf{Keywords:} Nonlinear Schr\"odinger equation, Stratonovich Noise, Strichartz estimates, Pathwise Uniqueness, Littlewood-Paley decomposition 

\section{Introduction and main result}

This article is concerned with the nonlinear Schr\"odinger equation with multiplicative noise
\begin{equation}
\label{ProblemStratonovich}
\left\{
\begin{aligned}
\df u(t)&= \left(\im \Delta_g u(t)-\im \lambda \vert u(t)\vert^{\alpha-1} u(t)\right) dt-\im \sumM e_m u(t) \circ \df \beta_m(t),\hspace{0,3 cm} t\in (0,T),\\
u(0)&=u_0\in \Heins,
\end{aligned}\right.
\end{equation}
on a compact riemannian manifold $M,$ where $\Delta_g$ is the Laplace-Beltrami-operator, $\alpha>1,$ $\lambda\in \{-1,1\},$ $\left(e_m\right)_{m\in\N}$ are real valued functions and  $\left(\beta_m\right)_{m\in\N}$ are independent Brownian motions. if $\lambda=1,$ the NLS is called defocusing and $\lambda=-1,$ it is called focusing.

In the previous article \cite{ExistencePaper}, we constructed a martingale solution of $\eqref{ProblemStratonovich}$ in arbitrary dimension for $\lambda=1$ and $\alpha\in (1,1+\frac{4}{(d-2)_+})$ or $\lambda=-1$ and $\alpha\in (1,1+\frac{4}{d}).$ Moreover, we proved pathwise uniqueness of solutions in the $2D$-case. The aim of the present article is to show pathwise uniqueness in the significantly harder three dimensional case and to generalize the result by Burq, G\'erard and Tzvetkov from \cite{Burq}, Theorem 3, for the cubic NLS  to the stochastic setting.

\begin{Theorem}\label{Uniqueness3d}
	Let $M$ be a compact $3D$ riemannian manifold.
	Let $\lambda\in \{-1,1\},$ $\alpha\in (1,3]$ and $e_m\in \LInfty$ real valued with $\nabla e_m\in L^3(M)$  for $m\in\N$ and
	\begin{align}\label{Regularityem}
	\sumM \left(\norm{\nabla e_m}_{L^3}+\norm{e_m}_{L^\infty}\right)^2<\infty.
	\end{align}
	Then, solutions of $\eqref{ProblemStratonovich}$ are pathwise unique.
\end{Theorem}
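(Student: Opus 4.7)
The plan is to adapt to the stochastic setting the deterministic argument of Burq--G\'erard--Tzvetkov from \cite{Burq}, which combines Strichartz estimates with loss on compact 3d manifolds with a Littlewood--Paley decomposition. Let $u_1, u_2$ be two solutions of \eqref{ProblemStratonovich} with the same initial datum and driven by the same noise, and set $w := u_1 - u_2$. Because the noise coefficient is linear in the state, the difference $w$ satisfies the linear-noise equation
\[
\df w = \bigl(\im\beltrami w - \im\lambda\bigl(|u_1|^{\alpha-1}u_1 - |u_2|^{\alpha-1}u_2\bigr)\bigr)\df t -\im \sumM e_m w \circ \df \beta_m, \quad w(0)=0.
\]
Since we only know a priori that $u_j(t)\in\Heins$, a standard localisation at the stopping time $\tau_R := \inf\{t\in[0,T]: \norm{u_1(t)}_{\Heins}+\norm{u_2(t)}_{\Heins} > R\}\wedge T$ reduces the problem to proving $w\equiv 0$ on $[0,\tau_R]$ for each fixed $R>0$.

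I then perform a Littlewood--Paley decomposition $w = \sum_{k\geq 0} \LocK w$ adapted to $\beltrami$, and write a Duhamel formula for each dyadic piece $w_k := \LocK w$ with respect to the free Schr\"odinger evolution. On a short interval $I$ the deterministic Strichartz estimate of \cite{Burq} controls $\norm{w_k}_{L^p(I,L^q)}$ for a suitable admissible pair $(p,q)$ in terms of the $L^2$-norm of $w_k$ at the initial time of $I$, a dual Strichartz norm of the nonlinear right-hand side, and, via the stochastic Strichartz estimate from \cite{ExistencePaper}, a $\gamma$-radonifying norm of the noise coefficient family $(e_m w_k)_m$. The hypothesis \eqref{Regularityem} is exactly what is needed to dominate this $\gamma$-norm by a Strichartz norm of $w$ with constants depending only on the sum in \eqref{Regularityem}, commuting $\LocK$ past the multiplication by $e_m$ and paying the price with $\nabla e_m$ in $L^3$.

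The decisive step is the nonlinear estimate. Setting $F(z):=|z|^{\alpha-1}z$ and $u_\theta := \theta u_1+(1-\theta)u_2$,
\[
F(u_1) - F(u_2) = \int_0^1 DF(u_\theta)\, w\, \df\theta,
\]
and one has to bound $\norm{\LocK\bigl(F(u_1)-F(u_2)\bigr)}$ in a dual Strichartz space by an expression involving $\norm{w}$ in a Strichartz space and powers of $\norm{u_j}_{\Heins}\leq R$. Using H\"older together with Bernstein's inequality on dyadic blocks and the Sobolev embedding $\Heins\hookrightarrow L^6(M)$ in dimension three, this can be done for every $\alpha\in(1,3]$. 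The worst case is the critical cubic exponent $\alpha=3$, where the endpoint admissible pair $(p,q)=(2,6)$ has to be used and the loss of derivatives in the manifold Strichartz estimate is tight; this forces the length $|I|$ to be chosen sufficiently small, independently of $k$ after absorbing the loss, to extract a small factor.

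Summing the dyadic pieces with weights corresponding to an intermediate $H^s$-norm, $s<1$, absorbing the nonlinear and stochastic contributions into the left-hand side via the smallness of $|I|$, and iterating over a finite partition of $[0,\tau_R]$ yields $w\equiv 0$ almost surely on $[0,\tau_R]$; letting $R\to\infty$ finishes the proof. The main obstacles I anticipate are (i) closing the nonlinear estimate at the critical exponent $\alpha=3$, where the trilinear bound only barely fits into the available Strichartz scale with loss, (ii) the careful treatment of the $\gamma$-radonifying norm of the multiplicative noise under assumption \eqref{Regularityem}, which is where the $L^3$-regularity of $\nabla e_m$ enters through commutator estimates with $\LocK$, and (iii) the bookkeeping of the Littlewood--Paley summation so that no logarithmic divergence in $k$ spoils the contraction needed for the Gronwall-type conclusion.
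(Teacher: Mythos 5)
Your scheme—Duhamel for the difference $w=u_1-u_2$, frequency-localized Strichartz estimates, and a Gronwall/absorption argument in Strichartz-type norms—has a genuine gap exactly at the step you call decisive. On a general compact 3d manifold the Strichartz estimates of \cite{Burq} either lose $1/q$ derivatives (so $1/2$ at the endpoint pair $(2,6)$) or are loss-free only for data localized by $\varphi(h^2\Delta_g)$ and on time intervals of length $\lesssim h$. Since $w$ can only be measured in $L^2$-based norms (its initial datum vanishes in $L^2$, and you cannot shift the missing half derivative onto $w$), closing the estimate for $\|\varphi(2^{-k}\Delta_g)(F(u_1)-F(u_2))\|$ in a dual Strichartz norm with only $\|u_j\|_{L^\infty(0,T;H^1)}\le R$ at hand would require, at $\alpha=3$, control of $u_j$ essentially in $L^\infty_x$, i.e. a gain of $\tfrac12+\varepsilon$ derivatives over $H^1(M)$ — precisely what is unavailable; H\"older, Bernstein and $H^1(M)\hookrightarrow L^6(M)$ do not repair this. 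If instead you work on the loss-free, frequency-localized level, each dyadic block must be iterated over $\sim T\,2^{k/2}$ subintervals, restarting from the (not small) value of $w_k$ at each left endpoint, and the frequency mixing in $\varphi(2^{-k}\Delta_g)\bigl(|u_1|^2w\bigr)$ destroys any diagonal-in-$k$ absorption; no choice of intermediate $H^s$ weights makes the sum close. This is exactly why Burq, G\'erard and Tzvetkov do not prove their uniqueness theorem by contraction, and why the paper takes a different route: it applies the Duhamel/Strichartz/Littlewood--Paley machinery to each solution $u_j$ separately (where the nonlinearity maps $H^1$ into $H^{1,6/5}$, so a full derivative survives and Bernstein gives the decisive powers of $h$), obtaining the pathwise bound $\|u_j\|_{L^2(J,L^p)}\lesssim_\omega 1+(|J|p)^{1/2}$ for all $p\ge 6$ (Proposition \ref{controlHighNorms}), and then runs a Yudovich-type optimization in $p$ on the plain $L^2$ energy identity for $w$, rather than any Strichartz estimate for $w$.

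The second gap concerns the noise. You propose to estimate the stochastic convolution in the difference equation by stochastic Strichartz/$\gamma$-radonifying bounds; these are moment estimates, whereas your Gronwall argument must be run pathwise (all the "constants" involving $u_1,u_2$ are random), and an expectation bound on a stochastic convolution cannot be absorbed pathwise on random short intervals. The structural point your proposal never uses is that the $e_m$ are real-valued, hence the $B_m$ are symmetric on $L^2(M)$ and the noise is conservative: It\^o's formula for $\|w(t)\|_{L^2}^2$ makes the stochastic integral and the It\^o correction cancel exactly (Lemma \ref{SolutionDifferenceLemma}), leaving a purely pathwise identity in which only the nonlinearity appears. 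The stochastic Strichartz estimate (Lemma \ref{stochasticStrichartzLemma}) is needed only once, inside the proof of Proposition \ref{controlHighNorms}, where a bound in expectation suffices because one merely needs an a.s.\ finite random constant; likewise no commutator estimates of $\varphi(2^{-k}\Delta_g)$ with multiplication by $e_m$ enter the actual proof. Without this cancellation and without the $\sqrt{p}$-growth bound, your proposed argument does not yield pathwise uniqueness.
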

Note that in contrast to existence, the uniqueness result is the same for the focusing and defocusing NLS.
As an immediate consequence of the Yamada-Watanabe-Theory developed in \cite{KunzeYamada}, Theorem 5.3 and Corollary 5.4, we obtain the existence of a unique strong solution of \eqref{ProblemStratonovich}.

\begin{Korollar}\label{Korollar3d}
	Let $M$ be a compact $3D$ riemannian manifold.
	Let $\lambda=1$ and $\alpha\in (1,3]$ or $\lambda=-1$ and $\alpha \in (1,\frac{7}{3}).$ If $\left(e_m\right)_{m\in\N}$ satisfies the conditions from Theorem \ref{Uniqueness3d}, there is a unique strong solution of $\eqref{ProblemStratonovich}$ and martingale solutions are unique in law.
\end{Korollar}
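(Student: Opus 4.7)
The plan is to combine the pathwise uniqueness result from Theorem \ref{Uniqueness3d} with the existence of martingale solutions established in \cite{ExistencePaper} via the abstract Yamada--Watanabe principle of Kunze \cite{KunzeYamada}. The logical content is that pathwise uniqueness plus weak (martingale) existence implies both strong existence and uniqueness in law, so no new analysis on the equation itself is required; one only has to cast the problem into Kunze's framework.

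First, I would record the inputs. From \cite{ExistencePaper} I take the existence of a global martingale solution of \eqref{ProblemStratonovich} with $\Heins$-valued paths, which requires the subcritical range for the respective sign of the nonlinearity: $\alpha \in (1,1+\frac{4}{(d-2)_+}) = (1,5)$ when $\lambda=1$, and $\alpha \in (1,1+\frac{4}{d}) = (1,\tfrac{7}{3})$ when $\lambda=-1$. Intersecting with the uniqueness range $\alpha\in(1,3]$ of Theorem \ref{Uniqueness3d} gives precisely the range stated in the corollary, and the noise coefficients $(e_m)_{m\in\N}$ satisfy \eqref{Regularityem}, which is stronger than what is needed for existence. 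Hence both a martingale solution exists and Theorem \ref{Uniqueness3d} is applicable, so pathwise uniqueness holds in $C([0,T],\Heins)$.

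Next, I would verify the hypotheses of \cite[Theorem~5.3, Corollary~5.4]{KunzeYamada}. This involves identifying the state space, the path space, and the coefficient maps of \eqref{ProblemStratonovich} with the abstract data in \cite{KunzeYamada}; in our setting the state space is $\Heins$, the driving noise is a cylindrical Wiener process on the Hilbert space generated by $(e_m)_m$ with the independent Brownian motions $(\beta_m)_m$, and the drift and diffusion coefficients (after rewriting the Stratonovich noise in It\^o form, as already carried out in \cite{ExistencePaper}) are jointly measurable in the appropriate sense. The martingale solution from \cite{ExistencePaper} then provides a weak solution in Kunze's sense, and Theorem \ref{Uniqueness3d} provides pathwise uniqueness. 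Kunze's result yields the existence of a (probabilistically) strong solution on any given stochastic basis with prescribed Wiener process, together with uniqueness in law for all martingale solutions.

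The only nontrivial bookkeeping step is matching the precise measurability/path space setup; this is routine once the correct Hilbert space for the noise is fixed, and I do not expect a genuine obstacle, which is consistent with the authors describing the conclusion as an immediate consequence. The substantive mathematical content has been absorbed into Theorem \ref{Uniqueness3d}.
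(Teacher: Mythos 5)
Your proposal is correct and follows exactly the paper's route: the corollary is obtained by intersecting the existence range from \cite{ExistencePaper} ($\alpha\in(1,5)$ for $\lambda=1$, $\alpha\in(1,\tfrac{7}{3})$ for $\lambda=-1$ in $d=3$) with the uniqueness range of Theorem \ref{Uniqueness3d}, noting that \eqref{Regularityem} also covers the assumptions needed for existence, and then invoking Kunze's Yamada--Watanabe results (\cite{KunzeYamada}, Theorem 5.3 and Corollary 5.4). This is precisely how the paper treats it, as an immediate consequence with no further analysis of the equation.
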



  The question of existence and uniqueness of global solutions of the stochastic nonlinear Schr\"odinger equation was previously addressed by de Bouard and Debussche in \cite{BouardLzwei} and \cite{BouardHeins}, Barbu, R\"ockner and Zhang in \cite{BarbuL2}, \cite{BarbuH1}, \cite{zhang2017}
   and Hornung in \cite{FHornung}. In these articles, the authors considered the fullspace $\Rd$ and employed a fixed point argument based on Strichartz estimates to prove existence and uniqueness in one step. As in the deterministic setting, their ranges of exponents $\alpha$ depend on the space dimension and the considered regularity. Brze{\'z}niak and Millet followed a similar approach for the stochastic NLS on a compact 2D manifold $M.$ In higher dimensions, their argument only yields local solutions  since the estimates for the nonlinearity rely on the Sobolev embeddings $H^{s,p}\hookrightarrow L^\infty$ that are too restrictive to work in the energy space $\Heins.$
   Another result about the stochastic NLS is due to
   Keller and Lisei, see \cite{lisei2016stochastic}, who considered the equation on the space-interval $(0,1)$ with Neumann boundary conditions.  They proved existence with a Galerkin method and uniqueness via the Sobolev embedding $H^1(0,1)\hookrightarrow L^\infty(0,1).$ Hence, their argument cannot be transfered to higher dimensions.
   After this work was finished, we learned about a recent paper \cite{CheungMosincat} by Cheung and Mosincat. Using the additional structure in the special case of the $d$-dimensional torus $M=\mathbb{T}^d$ and algebraic nonlinearities, i.e. $\alpha=2k+1$ for some $k\in\N,$ the authors employed a fixed point argument based on multilinear Strichartz estimates and an estimate of the stochastic convolution in Bourgain spaces $X^{s,b}$ combined with the truncation method from  \cite{BouardLzwei}, \cite{BouardHeins} and \cite{FHornung}. As a result, they solved the NLS with multiplicative noise in $L^2(\Omega,C([0,\tau],H^s(\mathbb{T}^d))\cap X^{s,b}([0,\tau]))$ for all $s>s_{crit}:=\frac{d}{2}-\frac{2}{\alpha-1}$ and some $b<\frac{1}{2}$ as well as some stopping time $\tau>0.$ As a byproduct, their argument also implies pathwise uniqueness of martingale solutions in $L^2(\Omega,C([0,T],H^s(\mathbb{T}^3))\cap X^{s,b}([0,T]))$ for $\alpha=3$ and $s>\frac{1}{2},$ which reflects an improvement compared to the general case considered in Theorem \ref{Uniqueness3d}.  \\

    Our approach separates existence and uniqueness.
    The construction of a martingale solution  in \cite{ExistencePaper} did not use Strichartz estimates. It was only based on the Hamiltonian structure of the NLS and the compactness of the embedding $\Heins\hookrightarrow L^p(M).$ Since these ingredients are independent of the underlying geometry, the proof worked in a more general framework. In particular, we considered arbitrary dimensions $d\in \N$ and powers $\alpha\in (1,1+\frac{4}{(d-2)_+})$ and could also deal with Dirichlet and Neumann Laplacians on bounded domains as well as their fractional powers. The flexibility of this approach is underlined by the fact that it could be also used to construct a martingale solution of the NLS with pure jump noise, see \cite{NLSLevy}.
   In the following, we would like to explain the difficulties of the uniqueness result in the three dimensional case and sketch the proof, which is inspired by the ideas of Burq, G\'erard and Tzvetkov in \cite{Burq}.
   We take two solutions with $u_1,u_2\in L^\infty(0,T;\Heins)$ almost surely.
   Our starting point is the representation
   \begin{align}\label{differenceFormulaIntroduction}
   \norm{u_1(t)-u_2(t)}_{L^2}^2=&2 \int_0^t \Real \skpLzwei{u_1(s)-u_2(s)}{-\im \lambda\vert u_1(s)\vert^{\alpha-1} u_1(s)+\im \vert u_2(s)\vert^{\alpha-1} u_2(s)} \df s
   \end{align}
   almost surely for all $t\in [0,T].$ At this point, it is crucial to consider Stratonovich noise with real valued coefficients, since this leads to cancellations of the stochastic integral and the correction term in It\^o's formula. We remark that the formula \eqref{differenceFormulaIntroduction} is closely related to the mass conservation of solutions to \eqref{ProblemStratonovich} which leads to the notion of conservative noise.
   To use \eqref{differenceFormulaIntroduction} for a uniqueness proof, we employ
   the local Strichartz estimate
  \begin{align}\label{localizedStrichartz}
  \norm{t\mapsto e^{\im  t\Delta_g}\Loc x}_{L^q(0,T;L^p)}\lesssim \norm{x}_{L^2},\qquad x\in \Lzwei,
  \end{align}
   for small times $ T\lesssim h$ and the global Strichartz estimate
  \begin{align}\label{StrichartzLoss}
  \norm{t\mapsto e^{\im  t\Delta_g}x}_{L^q(0,T;L^p(M))}\lesssim \norm{x}_{H^{\frac{1}{q}}(M)},\qquad x\in H^{\frac{1}{q}}(M).
  \end{align}
   from \cite{Burq} for $p,q\in [2,\infty]$ with
   $
   \frac{2}{q}+\frac{d}{p}=\frac{d}{2}$ and $(q,p,d)\neq(2,\infty,2).$ Here, $h\in (0,1]$ and $\varphi \in C_c^\infty(\R)$ can be chosen arbitrarily.

   In two dimensions, \eqref{StrichartzLoss} improves the regularity to $u_1,u_2\in L^q(0,T; H^{s-\frac{1}{q},p})$ almost surely for  $s\in (1-\frac{1}{q},1).$
   Hence, one can use a Gronwall argument based on the Sobolev embedding $H^{s-\frac{1}{q},p}(M)\hookrightarrow \LInfty$  to prove pathwise uniqueness. For the details, we refer to \cite{ExistencePaper}.
   In 3D, the challenge is to gain $\frac{1}{2}+\varepsilon$ derivatives with respect to the embedding $H^{\frac{3}{2}+\varepsilon}(M)\hookrightarrow \LInfty$ in order to control the nonlinearity in \eqref{differenceFormulaIntroduction} by the $H^1$-estimates of the solutions. Unfortunately, this is not possible, but it turns out that one can replace $L^\infty$-estimates by
   \begin{align}\label{3DEstimateSolutions}
   \norm{u_j}_{L^2(J,L^p)}\lesssim 1+\left(\vert J\vert p\right)^\frac{1}{2}\qquad \text{a.s.}
   \end{align}
   for all $p\in [6,\infty)$ and intervals $J\subset [0,T].$ Then, we use \eqref{3DEstimateSolutions} and the control of the $L^p$-norms for $2\le p\le 6$ by $\Heins\hookrightarrow L^6(M)$ to get an inequality
   \begin{align}\label{YudovitchStrategy}
   \norm{u_1(t)-u_2(t)}_{L^2}^2\le C(p,u_1,u_2,\vert J\vert)
   \end{align}
   with $C(p,u_1,u_2,\vert J\vert)\to 0$ a.s. as $p\to \infty$ for sufficiently small time intervals $J\subset [0,T].$

      In order to get \eqref{3DEstimateSolutions}, we use partitions of unity to estimate the solutions locally in time and frequency by the Strichartz estimate \eqref{localizedStrichartz}.
      To control the stochastic term, we adapt Brze{\'z}niak's and Millet's approach from \cite{BrzezniakStrichartz} to derive a spectrally localized stochastic Strichartz estimate.
      Afterwards, we reassemble the local estimates by Littlewood-Paley-Theory. We point out that  the proof is restricted to dimension $d=3$ and $\alpha\in (1,3].$ In fact, we need the endpoint Strichartz estimate by Keel and Tao, \cite{keelTao}, to prove pathwise uniqueness for $\alpha=3.$
   We would like to point out that recently, Bernicot and Savoyeau, see \cite{Bernicot}, could prove estimates of the type of \eqref{localizedStrichartz} and \eqref{StrichartzLoss} also in the case of possibly non-compact manifolds with bounded geometry. Unfortunately, their estimate \eqref{localizedStrichartz} only holds for $T\le h^{1+\varepsilon}$ and \eqref{StrichartzLoss} holds with loss $\frac{1+\varepsilon}{p}$ for an arbitrary $\varepsilon>0.$ Moreover, the constants depend on $\varepsilon,$ which leads to an additional growth of the constant in \eqref{3DEstimateSolutions} as $p\to \infty.$ Hence, the results from \cite{Bernicot} cannot be applied scheme of proof.

  The strategy to use estimates of the type \eqref{YudovitchStrategy} to prove uniqueness was developed by Yudovitch, \cite{yudovich1963non}, for the Euler equation. In the context of the NLS, it was  used by Vladimirov in \cite{Vladimirov}, Ogawa and Ozawa in \cite{Ogawa} and \cite{OgawaOzawa}. They looked at $2D$ domains and used Trudinger type inequalities as an analogon to \eqref{3DEstimateSolutions} to control the growth of $L^p$-norms for $p\to \infty.$
   Burq, G\'erard and Tzvetkov could use the Yudovitch-strategy for three dimensional manifolds without boundary due to the regularizing effect of Strichartz estimates. In \cite{blairStrichartz}, Blair, Smith and Sogge proved uniqueness of weak solutions of the deterministic NLS on compact $3D$ manifolds with boundary as an application of their Strichartz estimates on this type of geometry.

  The paper is organized as follows. In section 2, we fix the notations, formulate our assumptions and collect auxiliary results. Section 3 is devoted to proof of the estimate \eqref{3DEstimateSolutions} and the pathwise uniqueness. 
\section{Definitions and auxiliary results}

This section is devoted to the notations, definitions and auxiliary results that will be used in the next section to show pathwise uniqueness.

If $a,b\ge 0$ satisfy the inequality $a\le C b$ with a constant $C>0$, we write $a \lesssim b.$ Given $a\lesssim b$ and $b\lesssim a,$ we write $a\eqsim b.$
For two   Banach spaces $E,F$, we denote by $\mathcal{L}(E,F)$ the space of linear bounded operators $B: E\to F$ and abbreviate $\mathcal{L}(E):=\mathcal{L}(E,E).$
We use the notation $\HS(H_1,H_2)$ for the space of Hilbert-Schmidt-operators between Hilbert spaces $H_1$ and $H_2.$
Furthermore, we write $E\hookrightarrow F$ if $E$ is continuously embedded in $F;$ i.e. $E\subset F$ with natural embedding  $j\in \mathcal{L}(E,F).$

Let $M$ be a three dimensional compact riemannian $C^\infty$ manifold without boundary and  $L^p(M)$ for $p\in [1,\infty]$ the space equivalence classes of $\C$-valued  $p$-integrable functions. The distance induced by $g$ is denoted by $\rho$ and canonical measure on $M$ is called $\mu.$
By $L^p(M)$ for $p\in [1,\infty],$ we denote the space of equivalence classes of $\C$-valued  $p$-integrable functions with respect to $\mu.$  The Laplace-Beltrami operator on $M,$ i.e. the generator of the heat semigroup on $M$, is named $\Delta_g.$  Moreover, we use the fractional Sobolev spaces
\begin{align*}
H^{s,p}(M):=\left\{u\in L^p(M): \exists v\in L^p(M): u=(I-\Delta_g)^{-\frac{s}{2}}v \right\}
\end{align*}
for $p\in [1,\infty)$ and $s\ge 0$
with the norm
$\norm{u}_{H^{s,p}}:=\norm{v}_{L^p}.$
For $s< 0,$ the space  $H^{s,p}(M)$ is defined as the completion of $L^p(M)$ with respect to
\begin{align*}
\norm{u}_{H^{s,p}}:=\norm{(I-\Delta_g)^{\frac{s}{2}}u}_{L^p},\qquad u\in L^p(M).
\end{align*}	
For all $s\in \R,$ we shortly denote $H^s(M):=H^{s,2}(M).$ For properties of the Laplace-Beltrami operator, characterizations of the fractional Sobolev spaces and embedding theorems, we refer to \cite{Triebel} and \cite{Strichartz}. For $s=1,$ one can show that the definition from above coincides with the classical Sobolev space and $\left(\norm{u}_{L^2}^2+\norm{\nabla u}_{L^2}^2\right)^\frac{1}{2}$ defines an equivalent norm on $\Heins.$ We refer to \cite{lablee2015spectral} for an explanation of the gradient as an element of the tangential bundle of $M.$

Next, we summarize the assumptions on the coefficient of the noise in $\eqref{ProblemStratonovich}.$
\begin{Assumption}\label{stochasticAssumptions}
	Let $Y$ be a separable Hilbert space  and $B: \Heins \to \HS(Y,\Heins)$ a linear operator. For an ONB $\left(f_m\right)_{m\in\N}$ of $Y$ and $m\in \N,$ we set $B_m :=B(\cdot)f_m.$ Additionally, we assume that $B_m,$ $m\in\N,$ are bounded operators on $\Heins$ with
	\begin{align}\label{noiseBoundsH}
	\sumM \norm{B_m}_{\mathcal{L}(H^1)}^2<\infty
	\end{align}
	and that $B_m$ is symmetric as operator in $\Lzwei,$ i.e.
	\begin{align}\label{BmSymmetric}
		\skpLzwei{B_m u}{v}=\skpLzwei{u}{B_m v},\qquad u,v\in\Heins.
	\end{align}
\end{Assumption}
In particular, we have $B\in \mathcal{L}\left(\Heins,\HS(Y,\Heins)\right)$ and $\mu\in \mathcal{L}(\Heins)$ if we abbreviate
\begin{align*}
	\mu(u) := -\frac{1}{2} \sumM B_m^2 u,\qquad u\in\Heins.
\end{align*}
We look at the following slight generalization of \eqref{ProblemStratonovich} in the It\^o form 	
\begin{equation}\label{Problem}
\left\{
\begin{aligned}
\df u(t)&= \left(\im \Delta_g u(t)-\im \lambda \vert u(t)\vert^{\alpha-1} u(t)+ \mu \left(u(t)\right) \right) \df t-\im B u(t) \df W(t),\hspace{0,3 cm} t\in (0,T),\\
u(0)&=u_0.
\end{aligned}\right.
\end{equation}
In the introduction, we used that the process
\begin{align*}
W=\sumM f_m \beta_m
\end{align*}
with a sequence $\left(\beta_m\right)_{m\in\N}$ of independent Brownian motions is a cylindrical Wiener process in $Y,$ see \cite{daPrato}, Proposition 4.7, and the identity
\begin{align}
-\im B u(t) \circ\df W(t)=-\im B u(t) \df W(t)+\mu\left(u(t)\right) \df t,
\end{align}
which relates It\^o and Stratonovich noise. For the sake of simplicity, we restricted ourselves to the special case of multiplication operators
\begin{align*}
	B_m u=e_m u, \qquad u\in\Heins.
\end{align*}
with real valued functions $e_m$ satisfying
	\begin{align}\label{emAssumption}
	\sumM \left(\norm{\nabla e_m}_{L^3}+\norm{e_m}_{L^\infty}\right)^2<\infty.
	\end{align}
 We want to justify that they fit in  Assumption \ref{stochasticAssumptions}.
The Sobolev embedding $\Heins \hookrightarrow L^6(M)$  and the H\"older inequality yield
\begin{align*}
\norm{\nabla \left(e_m u\right)}_{L^2} \le& \norm{ u \nabla e_m }_{L^2}+ \norm{ e_m \nabla u }_{L^2}
\le \norm{\nabla e_m}_{L^3} \norm{u}_{L^6}+\norm{e_m}_{L^\infty} \norm{\nabla u}_{L^2}\\
\lesssim& \left(\norm{\nabla e_m}_{L^3}+\norm{e_m}_{L^\infty}\right) \norm{u}_{H^1},\qquad u\in\Heins.
\end{align*}
Thus,
\begin{align*}
	\norm{B_m u}_{H^1}\eqsim \norm{e_m u}_{L^2}+\norm{\nabla \left(e_m u\right)}_{L^2}\lesssim \left(\norm{\nabla e_m}_{L^3}+\norm{e_m}_{L^\infty}\right) \norm{u}_{H^1},\qquad u\in\Heins.
\end{align*}
Note that the existence-Theorem from \cite{ExistencePaper}  additionally needs the assumptions $B_m\in \mathcal{L}(\Lzwei)\cap \mathcal{L}(L^{\alpha+1}(M))$ with
	\begin{align}
	\sumM \norm{B_m}_{\mathcal{L}(L^2)}^2<\infty,\qquad \sumM \norm{B_m}_{\mathcal{L}(L^{\alpha+1})}^2<\infty.
	\end{align}
But in our example of multiplication operators, this assumption is implied by \eqref{emAssumption}.	
 In the first Definition, we explain two solution concepts for problem \eqref{ProblemStratonovich}.

\begin{Definition}\label{MartingaleSolutionDef}
	Let $T>0$ and $u_0\in \Heins.$
	\begin{enumerate}
		\item[a)] A \emph{martingale solution} of the equation $\eqref{ProblemStratonovich}$ is a system $\left(\Omega,\F,\Prob,W,\Filtration,u\right)$ with
		\begin{itemize}
			\item a probability space $\left(\Omega,\F,\Prob\right)$
			\item a $Y$-valued cylindrical Wiener $W$ process on $\Omega;$
			\item  a filtration $\Filtration=\left(\F_t\right)_{t\in [0,T]}$ with the usual conditions;
			\item a continuous, $\Filtration$-adapted process with values in $\Hminuseins$ such that almost all paths are in $\SchwachStetigHeins$ and $u\in L^2(\Omega\times [0,T],\Heins);$
		\end{itemize}
		such that the equation
		\begin{align}\label{ItoFormSolution}
		u(t)=  u_0+ \int_0^t \left[\im \Delta_g u(s)-\im \lambda \vert u(s)\vert^{\alpha-1} u(s)+\mu(u(s))\right] \df s- \im \int_0^t B u(s) \df W(s)
		\end{align}
		holds almost surely in $\Hminuseins$ for all $t\in [0,T].$
		\item[b)]
		Given a probability space $\left(\Omega,\F,\Prob\right),$
		a $Y$-valued cylindrical Wiener $W$ process on $\Omega,$
		and a filtration $\Filtration=\left(\F_t\right)_{t\in [0,T]}$ with the usual conditions,
		a \emph{strong solution} of the equation $\eqref{ProblemStratonovich}$ is  a continuous, $\Filtration$-adapted process with values in $\Hminuseins$ such that 
		almost all paths are in $\SchwachStetigHeins,$ $u\in L^2(\Omega\times [0,T],\Heins)$
		and $\eqref{ItoFormSolution}$ holds
		almost surely in $\Hminuseins$ for all $t\in [0,T].$
	\end{enumerate}
	
\end{Definition}

\begin{Remark}
	For $\alpha\in (1,3],$ the solution is almost surely continuous in $\Lzwei.$
	Indeed, this follows from the mild form
	\begin{align}\label{MildFormSolution}
	u(t)=  \group u_0+ \int_0^t \groupTS \left[-\im \lambda\vert u(s)\vert^{\alpha-1} u(s)+\mu(u(s))\right] \df s- \im \int_0^t \groupTS B u(s) \df W(s)
	\end{align}
	almost surely for all $t\in [0,T]$ (see for example the proof of Proposition $\ref{controlHighNorms}$ in a similar situation), since the nonlinearity with $\alpha\in (1,3]$ maps $\Heins$ to $\Lzwei$ by the Sobolev embedding $\Heins\hookrightarrow L^{2\alpha}(M).$
\end{Remark}

In the following definition, we fix different notions of uniqueness.
As we have seen in the previous remark, it makes sense to define uniqueness by comparing solutions in $C([0,T],\Lzwei).$

\begin{Definition}
	\begin{enumerate}
		\item[a)] 		The solutions of problem $\eqref{ProblemStratonovich}$ are called \emph{pathwise unique in \\$L^2(\Omega;L^\infty(0,T;\Heins))$}, if  given two martingale solutions $\left(\Omega,\F,\Prob,W,\Filtration,u_j\right)$ with $u_j\in L^2(\Omega;L^\infty(0,T;\Heins))$
		for $j=1,2,$  we have $u_1(t)=u_2(t)$ almost surely in ${L^2(M)}$ for all $t\in [0,T].$
		\item[b)] The solutions of $\eqref{ProblemStratonovich}$ are called \emph{unique in law in $L^2(\Omega;L^\infty(0,T;\Heins))$}, if given two martingale solutions $\left(\Omega_j,\F_j,\Prob_j,W_j,\Filtration_j,u_j\right)$  with $u_j(0)=u_0$ and \\$u_j\in L^2(\Omega;L^\infty(0,T;\Heins))$ for $j=1,2,$ we have $\Prob_1^{u_1}=\Prob_2^{u_2}$  on $C([0,T],L^2(M)).$
	\end{enumerate}
	
\end{Definition}

We continue with some auxiliary results which are either well-known or due to Burq, G\'erard and Tzvetkov, \cite{Burq}.
The first Lemma gives us an estimate for the nonlinear term in $\eqref{ProblemStratonovich}.$

\begin{Lemma}\label{LemmaEstimateNonlinearity}
	Let $q\in [2,6]$ and $r\in (1,\infty)$ with $\frac{1}{r'}=\frac{1}{2}+\frac{\alpha-1}{q}.$ Then, we have
	\begin{align*}
		\norm{\vert u\vert^{\alpha-1}u}_{H^{1,r'}}\lesssim \norm{u}_{H^1}^\alpha,\qquad u\in \Heins.
	\end{align*}
\end{Lemma}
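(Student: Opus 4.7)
The plan is to use the well-known equivalence, valid on a compact manifold for $r' \in (1,\infty)$,
\[
\norm{f}_{H^{1,r'}} \eqsim \norm{f}_{L^{r'}} + \norm{\nabla f}_{L^{r'}},
\]
which follows from the $L^{r'}$-boundedness of the Riesz transform on $M$ (and of $(I-\Delta_g)^{\pm 1/2}$). Set $f(u) := \vert u\vert^{\alpha-1} u$, so that the task reduces to estimating $\vert u\vert^{\alpha}$ in $L^{r'}$ and $\vert\nabla f(u)\vert$ in $L^{r'}$ by $\norm{u}_{H^1}^\alpha$. The exponent relation $\frac{1}{r'} = \frac{1}{2} + \frac{\alpha-1}{q}$ is tailor-made for a Hölder split between one factor in $L^2$ and $\alpha-1$ factors in $L^q$.

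For the $L^{r'}$ part, I would write $\vert u\vert^{\alpha} = \vert u\vert^{\alpha-1}\cdot\vert u\vert$ and apply Hölder with the two conjugate exponents $q/(\alpha-1)$ and $2$:
\[
\norm{\vert u\vert^{\alpha-1} u}_{L^{r'}} = \norm{\vert u\vert^{\alpha-1}}_{L^{q/(\alpha-1)}}\,\norm{u}_{L^2} = \norm{u}_{L^q}^{\alpha-1}\,\norm{u}_{L^2}.
\]
For the gradient part, one observes that $z \mapsto \vert z\vert^{\alpha-1} z$ is $C^1$ on $\C$ viewed over $\R$ (since $\alpha>1$) with real derivative bounded by $C\vert z\vert^{\alpha-1}$; the Sobolev chain rule, justified for $u\in H^1(M)$ by a standard approximation argument on charts, gives $\vert\nabla f(u)\vert \lesssim \vert u\vert^{\alpha-1}\vert\nabla u\vert$ almost everywhere. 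The same Hölder split then yields
\[
\norm{\nabla f(u)}_{L^{r'}} \lesssim \norm{u}_{L^q}^{\alpha-1}\,\norm{\nabla u}_{L^2}.
\]
Since $q\in [2,6]$, the Sobolev embedding $H^1(M)\hookrightarrow L^q(M)$ bounds $\norm{u}_{L^q}\lesssim \norm{u}_{H^1}$, and combining the two pieces gives $\norm{f(u)}_{H^{1,r'}}\lesssim \norm{u}_{H^1}^\alpha$.

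The one genuinely non-routine point is the chain rule for the non-smooth map $f$ when $\alpha-1$ is not an integer, since $f$ is only Hölder (not Lipschitz) at the origin. This is handled in the usual way: regularise $f$ by $f_\varepsilon(z)=(\vert z\vert^2+\varepsilon)^{(\alpha-1)/2}z$, for which the ordinary chain rule applies and the derivative bound $\vert Df_\varepsilon(z)\vert \le C\,(\vert z\vert^2+\varepsilon)^{(\alpha-1)/2}$ is uniform in $\varepsilon$, and then pass to the limit $\varepsilon\to 0$ via dominated convergence using $u\in H^1(M)$ and the Sobolev embedding $H^1(M)\hookrightarrow L^q(M)$ to ensure integrability of the dominating function $\vert u\vert^{\alpha-1}\vert\nabla u\vert$. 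After this technical step, the pointwise bound $\vert\nabla f(u)\vert \lesssim \vert u\vert^{\alpha-1}\vert\nabla u\vert$ is rigorous and the Hölder and Sobolev arguments above close the estimate.
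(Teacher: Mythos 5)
Your argument is correct and complete. The paper itself does not prove this lemma; it simply cites Lemma III.1.4 of Bolleyer's thesis \cite{Bolleyer}, so you have supplied the standard self-contained proof of the quoted result: the equivalence $\norm{f}_{H^{1,r'}}\eqsim\norm{f}_{L^{r'}}+\norm{\nabla f}_{L^{r'}}$ on a compact manifold (boundedness of the Riesz transform, cf.\ \cite{Strichartz}), the chain-rule bound $\vert\nabla(\vert u\vert^{\alpha-1}u)\vert\lesssim\vert u\vert^{\alpha-1}\vert\nabla u\vert$, the H\"older split dictated by $\frac{1}{r'}=\frac{\alpha-1}{q}+\frac12$, and the embedding $H^1(M)\hookrightarrow L^q(M)$ for $q\in[2,6]$ in dimension three. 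This is almost certainly the same mechanism as in the cited thesis, so the value of your write-up is that it makes the paper's black-box reference verifiable in place. One small inaccuracy worth fixing: for $\alpha>1$ the map $f(z)=\vert z\vert^{\alpha-1}z$ is \emph{not} merely H\"older at the origin — it is $C^1$ there with $Df(0)=0$ and is locally Lipschitz, since $\vert Df(z)\vert\lesssim\vert z\vert^{\alpha-1}$ is locally bounded; the genuine reason an approximation step (your $f_\varepsilon$, or a truncation) is needed is that $Df$ is unbounded at infinity, so the textbook chain rule for Lipschitz outer functions does not apply verbatim. Your regularisation with the uniform bound $\vert Df_\varepsilon(z)\vert\le C(\vert z\vert^2+\varepsilon)^{(\alpha-1)/2}$ and dominated convergence (the dominating function $(1+\vert u\vert^{\alpha-1})\vert\nabla u\vert$ lies in $L^{r'}$ by the same H\"older/Sobolev step) handles exactly this point, so the mislabelled difficulty does not affect the validity of the proof.
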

\begin{proof}
	See \cite{Bolleyer}, Lemma III.1.4.
\end{proof}

 The following Lemma deals with a Littlewood-Paley type decomposition of $L^p(M)$ for $p\in [2,\infty).$

\begin{Lemma}\label{LittlewoodPaley}
		Let $\psi \in \realTest,$ $\varphi\in \realTestNoNull$ with
		\begin{align*}
		1=\psi(\lambda)+\sum_{k=1}^\infty \varphi(2^{-k}\lambda),\qquad\lambda\in\R.
		\end{align*}
		Then, we have
		\begin{align}\label{LittlewoodPaleyEquHilbert}
		\norm{f}_{L^2}\eqsim \left( \norm{\psi(\Delta_g)f}_{L^2}^2+\sum_{k=1}^\infty \norm{\varphi(2^{-k}\Delta_g) f }_{L^2}^2\right)^\frac{1}{2},\qquad f\in \Lzwei,
		\end{align}
		and
		\begin{align}\label{LittlewoodPaleyEstLp}
		\norm{f}_{L^p}\lesssim_p \norm{\psi(\Delta_g)f}_{L^p}+\left(\sum_{k=1}^\infty \norm{\varphi(2^{-k}\Delta_g) f }_{L^p}^2\right)^\frac{1}{2},\qquad f\in L^p(M),
		\end{align}
		for $p\in [2,\infty).$
\end{Lemma}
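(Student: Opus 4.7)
The first assertion \eqref{LittlewoodPaleyEquHilbert} is essentially the spectral theorem applied to the selfadjoint nonnegative operator $-\Delta_g$, which has purely discrete spectrum because $M$ is compact. The choice $\varphi\in\realTestNoNull$ ensures that at every $\lambda\in\R$ only finitely many of the $\varphi(2^{-k}\lambda)$ are nonzero, and the pointwise identity $\psi(\lambda)+\sum_{k=1}^\infty \varphi(2^{-k}\lambda)=1$ together with this finite overlap yields
\[
\psi(\lambda)^2+\sum_{k=1}^\infty\varphi(2^{-k}\lambda)^2\in[c,C]
\]
with $0<c\le C<\infty$ on the spectrum of $-\Delta_g$. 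Expanding $\|f\|_{L^2}^2$ in the orthonormal eigenbasis of $-\Delta_g$ and inserting this two-sided bound gives \eqref{LittlewoodPaleyEquHilbert}; the series $\sum_k\varphi(2^{-k}\Delta_g)f$ converges unconditionally in $L^2$ by the same orthogonality.

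For \eqref{LittlewoodPaleyEstLp}, I would use the classical randomization argument. Introducing a sequence of independent Rademacher variables $(\epsilon_k)_{k\in\N}$ on an auxiliary probability space, Khintchine's inequality applied pointwise in $x\in M$ combined with Fubini gives
\[
\bigNorm{\Big(\sum_{k=1}^\infty |\varphi(2^{-k}\Delta_g)f|^2\Big)^{1/2}}_{L^p}\eqsim_p \Big(\E\bigNorm{\sum_{k=1}^\infty \epsilon_k \varphi(2^{-k}\Delta_g)f}_{L^p}^p\Big)^{1/p}.
\]
Combining this with the decomposition $f=\psi(\Delta_g)f+\sum_k\varphi(2^{-k}\Delta_g)f$ and the $L^p$-boundedness of $\psi(\Delta_g)$, the estimate \eqref{LittlewoodPaleyEstLp} reduces to the uniform bound
\[
\sup_{\epsilon\in\{-1,1\}^\N}\bigNorm{\sum_{k=1}^\infty \epsilon_k \varphi(2^{-k}\Delta_g)f}_{L^p}\lesssim_p \|f\|_{L^p}.
\]

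The remaining and main step is precisely this sign-uniform $L^p$-boundedness of the operators $m_\epsilon(\Delta_g):=\sum_k\epsilon_k\varphi(2^{-k}\Delta_g)$. Their symbols $m_\epsilon(\lambda)$ satisfy the Mihlin-type estimates $|\lambda^j m_\epsilon^{(j)}(\lambda)|\le C_j$ for every $j\ge 0$, with $C_j$ independent of $\epsilon$, because $\varphi$ is supported in $\R\setminus\{0\}$ and the rescaled supports have uniformly finite overlap in $\lambda$. A Mihlin--H\"ormander spectral multiplier theorem for $-\Delta_g$ on the compact Riemannian manifold $M$, available through standard heat kernel Gaussian bounds or finite propagation speed for the half-wave equation, then yields the required bound with a constant independent of $\epsilon$. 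Verifying that such a multiplier theorem applies uniformly in the signs is the principal technical point; once this is in hand, the lemma follows.
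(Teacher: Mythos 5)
Your treatment of \eqref{LittlewoodPaleyEquHilbert} is fine: the supports of $\varphi(2^{-k}\cdot)$ overlap at most boundedly often, so $c\le \psi(\lambda)^2+\sum_{k}\varphi(2^{-k}\lambda)^2\le C$ on $\R$ by Cauchy--Schwarz, and the Borel functional calculus for the selfadjoint operator $\Delta_g$ gives the two-sided $L^2$ bound. The uniform-in-$\epsilon$ Mihlin bounds for $m_\epsilon$ and the applicability of a spectral multiplier theorem on $M$ are also unproblematic. (For comparison, the paper does none of this from scratch: it quotes the two-sided square-function equivalence $\norm{f}_{L^p}\eqsim\bigNorm{\big(\vert\psi(\Delta_g)f\vert^2+\sum_k\vert\varphi(2^{-k}\Delta_g)f\vert^2\big)^{1/2}}_{L^p}$ from Bouclet respectively Kriegler--Weis, and then gets \eqref{LittlewoodPaleyEquHilbert} by Fubini and \eqref{LittlewoodPaleyEstLp} by Minkowski's inequality.)

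The $L^p$ part of your proposal has a genuine gap: your randomization argument produces the wrong inequality. Khintchine plus the sign-uniform bound $\sup_\epsilon\norm{\sum_k\epsilon_k\varphi(2^{-k}\Delta_g)f}_{L^p}\lesssim\norm{f}_{L^p}$ yields the \emph{upper} square-function estimate $\bigNorm{\big(\sum_k\vert\varphi(2^{-k}\Delta_g)f\vert^2\big)^{1/2}}_{L^p}\lesssim\norm{f}_{L^p}$, whereas \eqref{LittlewoodPaleyEstLp} is of the \emph{reverse} type: it bounds $\norm{f}_{L^p}$ from above by the blocks. Your claimed reduction (decomposition $f=\psi(\Delta_g)f+\sum_k\varphi(2^{-k}\Delta_g)f$ plus the uniform multiplier bound implies \eqref{LittlewoodPaleyEstLp}) does not go through: the triangle inequality leaves you with $\norm{\sum_k\varphi(2^{-k}\Delta_g)f}_{L^p}$, and this particular choice of signs is not controlled by the Rademacher average without the very unconditionality you are trying to prove. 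The missing step is the standard dual one: take $\tilde\varphi\in\realTestNoNull$ with $\tilde\varphi=1$ on $\supp\varphi$, so that $\varphi_k=\tilde\varphi_k\varphi_k$, and either pair $\sum_k\tilde\varphi_k(\Delta_g)\big(\varphi_k(\Delta_g)f\big)$ against $g\in L^{p'}$, using Cauchy--Schwarz in $k$ and your upper square-function bound in $L^{p'}$ for $\tilde\varphi$, or use the averaging identity $\sum_k\varphi_k(\Delta_g)f=\E_\epsilon\big[\big(\sum_j\epsilon_j\tilde\varphi_j\big)(\Delta_g)\sum_k\epsilon_k\varphi_k(\Delta_g)f\big]$ together with the uniform bound for the fattened multipliers. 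Even then you still need Minkowski's inequality to pass from $\bigNorm{\big(\sum_k\vert\varphi_k(\Delta_g) f\vert^2\big)^{1/2}}_{L^p}$ to $\big(\sum_k\norm{\varphi_k(\Delta_g) f}_{L^p}^2\big)^{1/2}$; this is where the restriction $p\ge2$ enters, and its absence from your plan is a symptom of the missing step.
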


\begin{proof}
	Let $p\in (1,\infty).$ By \cite{Bouclet}, page 2, or \cite{KrieglerWeis} Theorem 4.1 and estimate (2.9) in a more general setting, we have
	\begin{align*}
		\norm{f}_{L^p}\eqsim \bigNorm{\left(\vert\psi(\Delta_g)f\vert^{2}+\sum_{k=1}^\infty \vert\varphi(2^{-k}\Delta_g) f \vert^{2}\right)^\frac{1}{2}}_{L^p},\qquad f\in L^p(M).
	\end{align*}
	Hence, we get $\eqref{LittlewoodPaleyEquHilbert}$ by Fubini and $\eqref{LittlewoodPaleyEstLp}$
	 by Minkowski's inequality.
\end{proof}

The previous Lemma indicates the importance of estimating operators of the form $\Loc$ for $h\in (0,1].$ In the next Lemma, we state how they act in $L^p$-spaces and Sobolev spaces. Note that these kind of estimates are usually called \emph{Bernstein inequalities.}

\begin{Lemma}\label{Bernstein}
		\begin{enumerate}
		\item[a)] 	Let us assume that $1\le q\le r\le \infty.$ Then for any $\varphi\in C_c^\infty(\R)$, there is $C>0$ such that 
		\begin{align*}
		\norm{\Loc}_{ L^r(M)}\le C h^{d(\frac{1}{r}-\frac{1}{q})}\norm{\Loc u}_{ L^{q}}, \qquad u\in L^{q}(M), \;\;\; h\in(0,1].
		\end{align*}
		\item[b)] 	Let us assume that  $p\in (1,\infty)$ and $s\ge 0.$ Then, for every $\varphi\in \realTestNoNull,$ there is $C>0$ such that 
		\begin{align*}
		\norm{\Loc u}_{ L^p}\le C h^s \norm{\Loc u}_{ H^{s,p}},\qquad u\in H^{s,p}(M),\;\; h\in(0,1].
		\end{align*}
	\end{enumerate}
\end{Lemma}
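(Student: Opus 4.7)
My plan is to treat the two parts separately; both are standard consequences of the spectral calculus for $\Delta_g$, with the ``scale $h$'' of the localization playing the decisive role.

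For part (a), I would first introduce an auxiliary cut-off $\tilde\varphi\in C_c^\infty(\R)$ with $\tilde\varphi\equiv 1$ on a neighborhood of $\operatorname{supp}\varphi$, so that the functional calculus gives $\varphi(h^2\Delta_g)=\tilde\varphi(h^2\Delta_g)\,\varphi(h^2\Delta_g)$. The proof then reduces to showing that $\tilde\varphi(h^2\Delta_g)\colon L^q(M)\to L^r(M)$ with norm $\lesssim h^{d(1/r-1/q)}$. To this end I would use the pointwise kernel estimate
\[
|K_h(x,y)|\lesssim_N h^{-d}\bigl(1+\rho(x,y)/h\bigr)^{-N},\qquad N\in\N,
\]
for the Schwartz kernel of $\tilde\varphi(h^2\Delta_g)$, which is standard on a compact Riemannian manifold (it follows, e.g., from finite speed of propagation for the half-wave group combined with the Fourier representation of $\tilde\varphi$, or equivalently from Gaussian heat-kernel bounds and the Helffer--Sj\"ostrand formula; see Burq--G\'erard--Tzvetkov \cite{Burq}). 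Taking $N$ large, $\|K_h(x,\cdot)\|_{L^{q'}}\lesssim h^{-d/q}$ uniformly in $x$, and the bound on $\|\tilde\varphi(h^2\Delta_g)\|_{L^q\to L^r}$ follows from Young/Schur.

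For part (b), the key observation is that the support condition $\operatorname{supp}\varphi\subset\R\setminus\{0\}$ makes $\lambda^{-s/2}$ a smooth, bounded factor on $\operatorname{supp}\varphi$. Picking $\tilde\varphi\in C_c^\infty(\R\setminus\{0\})$ with $\tilde\varphi\equiv 1$ on $\operatorname{supp}\varphi$, I would write
\[
\varphi(h^2\Delta_g)u=m_h(\Delta_g)\,(I-\Delta_g)^{s/2}\varphi(h^2\Delta_g)u,\qquad m_h(\lambda):=\tilde\varphi(h^2\lambda)(1+\lambda)^{-s/2}.
\]
On the support of $\lambda\mapsto\tilde\varphi(h^2\lambda)$ one has $\lambda\sim h^{-2}$ (here I use $h\in(0,1]$ so that $1+\lambda\sim\lambda$), hence $(1+\lambda)^{-s/2}\sim h^s$, and after rescaling $\mu=h^2\lambda$ the multiplier $h^{-s}m_h$ and all its derivatives satisfy Mihlin-type bounds uniformly in $h$. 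The spectral multiplier theorem for $\Delta_g$ on the compact manifold $M$ (available in the $L^p$-setting for $p\in(1,\infty)$, cf.\ \cite{Bouclet} and \cite{KrieglerWeis}) then yields $\|m_h(\Delta_g)\|_{\mathcal{L}(L^p)}\lesssim h^s$, from which
\[
\|\varphi(h^2\Delta_g)u\|_{L^p}\le Ch^s\|(I-\Delta_g)^{s/2}\varphi(h^2\Delta_g)u\|_{L^p}=Ch^s\|\varphi(h^2\Delta_g)u\|_{H^{s,p}}
\]
follows immediately.

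The only delicate step is the pointwise kernel decay in part (a) and the uniform-in-$h$ Mihlin bound for $m_h$ in part (b); both are standard in the literature on functional calculus on compact manifolds, but they are the substantive content and I would invoke the cited references rather than redo the derivations. Everything else is algebraic manipulation within the spectral calculus.
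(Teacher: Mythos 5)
Your proof is correct and follows essentially the same route as the paper: part (a) is delegated to the kernel/functional-calculus estimates of Burq--G\'erard--Tzvetkov (the paper simply cites their Corollary 2.2), and part (b) factors $\varphi(h^2\Delta_g)$ through a fractional power composed with an $h$-dependent spectral multiplier satisfying Mihlin bounds of size $h^s$, exactly as in the paper. The only cosmetic differences are that the paper uses the homogeneous power $(-\Delta_g)^{s/2}$ and proves the $\mathcal{L}(L^p)$ bound for the multiplier by hand (weak-$(1,1)$ estimate via the Duong--Ouhabaz--Sikora spectral multiplier theorem, Marcinkiewicz interpolation, and duality) instead of citing an $L^p$ Mihlin--H\"ormander theorem as a black box, and that in (a) the Schur/Young step needs the exponent $\sigma$ with $1+\frac{1}{r}=\frac{1}{\sigma}+\frac{1}{q}$ (or interpolation between the $L^q\to L^q$ and $L^q\to L^\infty$ bounds) rather than only the $L^{q'}$ kernel bound you state, which by itself covers just $r=\infty$.
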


\begin{proof}
	\emph{ad a):} See \cite{Burq}, Corollary 2.2.\\	
		\emph{ad b):}
		Throughout this proof, we w.l.o.g.~assume $s>0$.
		Moreover, we take $\tilde{\varphi}\in \realTestNoNull$ with $\tilde{\varphi}=1$ on $\supp(\varphi)$ and define
		\begin{align*}
		f_h: [0,\infty) \to \R,\qquad f_h(t):=t^{-\frac{s}{2}}\tilde{\varphi}(-h^2 t)
		\end{align*}
		for $h\in (0,1].$ Then, we have $\varphi(-h^2t)=f_h(t)t^{\frac{s}{2}}\varphi(-h^2t)$ for all $t\in [0,\infty)$ and $h\in (0,1].$ Furthermore, we obtain that $f_h$ satisfies the Mihlin condition
		\begin{align*}
		\sup_{t\ge 0}\vert t^k f_h^{(k)}(t)\vert \lesssim h^s,\qquad k\in \N_0,\quad  h\in (0,1].
		\end{align*}
 Fact 2.20 in \cite{Uhl} and
			the Spectral Multiplier Theorem 7.6 in \cite{OuhabazGaussianBounds}
		hence imply
		\begin{align*}
			\norm{f_h(-\Delta_g)}_{\mathcal{L}(L^1,L^{1,\infty})}\lesssim h^s,\qquad  h\in (0,1].
		\end{align*}
		Since we also have 	
				\begin{align*}
		\norm{f_h(-\Delta_g)}_{\mathcal{L}(L^2)}\le \sup_{t\ge 0} \vert f_h(t)\vert \lesssim h^s, \qquad  h\in (0,1],
		\end{align*}
		by the Borel functional calculus for selfadjoint operators, the Marcinkiewitz Interpolation Theorem, see \cite{GrafakosClassical}, Theorem 1.3.2, yields
		\begin{align*}
		\norm{f_h(-\Delta_g)}_{\mathcal{L}(L^p)}\lesssim h^s, \qquad  h\in (0,1],
		\end{align*}		
		for $p\in (1,2].$ Since $f_h(-\Delta_g)$ is selfadjoint on $L^2(M)$, we obtain for $p\in (2,\infty)$
		\begin{align*}
		\norm{f_h(-\Delta_g)}_{\mathcal{L}(L^p)}&=\sup_{u\in L^p\cap L^2: \norm{u}_{L^p}\le 1} \sup_{v\in L^{p'}\cap L^2: \norm{v}_{L^{p'}}\le 1}\left\vert \skpLzwei{f_h(-\Delta_g)u}{v}\right\vert \\
		&=\sup_{v\in L^{p'}\cap L^2: \norm{v}_{L^{p'}}\le 1} \sup_{u\in L^p\cap L^2: \norm{u}_{L^p}\le 1}  \left\vert \skpLzwei{u}{f_h(-\Delta_g)v}\right\vert\\
		&=\norm{f_h(-\Delta_g)}_{\mathcal{L}(L^{p'})}
		\lesssim h^s, \qquad  h\in (0,1].
		\end{align*}
For every $p\in (1,\infty)$, we therefore get
	\begin{align*}
\norm{\Loc u}_{ L^p}&=\norm{f_h(-\Delta_g) \left(-\Delta_g\right)^{\frac{s}{2}}\Loc u}_{ L^p}
\lesssim  h^s \norm{\left(-\Delta_g\right)^{\frac{s}{2}}\Loc u}_{ L^p}\\
&\lesssim 		  h^s \norm{\Loc u}_{ H^{s,p}},\qquad u\in H^{s,p}(M).
\end{align*}		
This completes the proof of Lemma \ref{Bernstein}.

\end{proof}

%


In the following Lemmata, we quote the spectrally localized Strichartz estimates from \cite{Burq}, which are a consequence of \cite{keelTao}. In this paper, Keel and Tao solved the endpoint case needed for our application in the proof of Proposition $\ref{controlHighNorms}.$

\begin{Lemma}\label{homogenousStrichartzLemma}
	Let $M$ be a compact riemannian manifold of dimension $d\ge 1$ and $p,q\in [2,\infty]$ with
		\begin{align*}
		\frac{2}{q}+\frac{d}{p}=\frac{d}{2},\qquad (q,p,d)\neq(2,\infty,2).
		\end{align*}
	Then, for any $\varphi\in C_c^\infty(\R)$, there is $\beta>0$ and $C>0$ such that for $h\in(0,1]$ and any interval $J$ of length $\vert J\vert\le \beta h$
	\begin{align}\label{homogenousStrichartzEstimateOriginal}
	\norm{t\mapsto e^{\im  t\Delta_g}\Loc x}_{L^q(J,L^p)}\le C \norm{x}_{L^2},\qquad x\in \Lzwei.
	\end{align}
\end{Lemma}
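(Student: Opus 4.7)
The plan is to derive \eqref{homogenousStrichartzEstimateOriginal} by combining the trivial $L^2$-energy bound with an $L^1\to L^\infty$ dispersive decay for the spectrally localized propagator $\group\Loc$ that holds on the semiclassical window $|t|\le \beta h$, and then invoking the abstract $TT^\ast$ framework of Keel and Tao \cite{keelTao} to recover the full sharp-admissible Strichartz range, including the endpoint $q=2$ which the classical duality argument cannot reach. The key intermediate estimate is
\begin{align*}
\norm{\group\Loc}_{\mathcal{L}(L^1(M),L^\infty(M))}\lesssim |t|^{-d/2},\qquad 0<|t|\le \beta h,\ h\in(0,1].
\end{align*}
This local-in-time dispersive bound substitutes for the global Euclidean decay, which fails on a compact $M$ as soon as the geodesic flow develops caustics, at time of order one.

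To establish the dispersive estimate I would construct a semiclassical WKB parametrix for $\group\Loc$. Cover $M$ by finitely many coordinate charts carrying a smooth partition of unity; in each chart $\Delta_g$ has principal symbol $p(x,\xi)=-g^{ij}(x)\xi_i\xi_j$. Since $\Loc$ spectrally localizes to frequency $|\xi|\eqsim h^{-1}$, the Schwartz kernel of $\group\Loc$ can be modeled locally by a Lax-type oscillatory integral
\begin{align*}
K_h(t,x,y)=(2\pi h)^{-d}\int e^{\im (\phi(t,x,\xi)-y\cdot\xi)/h}\,a(t,x,\xi,h)\,\df \xi,
\end{align*}
where the phase $\phi$ satisfies the Hamilton-Jacobi equation $\partial_t\phi+p(x,\nabla_x\phi)=0$ with $\phi(0,x,\xi)=x\cdot\xi$, and the amplitude $a$ is built from the standard transport hierarchy. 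For $|t|\le \beta h$ the eikonal admits a smooth single-valued solution via the geodesic flow; stationary phase in $\xi$ at the critical point $\nabla_\xi\phi=y$ yields the gain $h^{d/2}\vert\det D_\xi^2\phi\vert^{-1/2}\eqsim h^{d/2}(|t|/h)^{-d/2}$, which combined with the prefactor $(2\pi h)^{-d}$ gives $|K_h(t,x,y)|\lesssim |t|^{-d/2}$. Summing over the finite chart decomposition, together with control of the WKB remainder of size $O(h^N)$, produces the claimed bound on all of $M$.

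Set $T(t):=\group\Loc$. The spectral theorem provides $\norm{T(t)}_{\mathcal{L}(\Lzwei)}\lesssim 1$, and since $T(t)T(s)^\ast=e^{\im (t-s)\Delta_g}\varphi(h^2\Delta_g)^2$ with $\varphi^2\in\realTest$, the dispersive bound transfers to $T(t)T(s)^\ast$ for $|t-s|\le 2\beta h$. Choosing $\beta$ so that every interval $J$ of length $\le \beta h$ has all internal time differences within the dispersive window, Theorem 1.2 of \cite{keelTao} immediately yields
\begin{align*}
\norm{t\mapsto T(t)x}_{L^q(J,L^p)}\lesssim \norm{x}_{\Lzwei}
\end{align*}
for every pair $(q,p)$ with $\tfrac{2}{q}+\tfrac{d}{p}=\tfrac{d}{2}$, $q\ge 2$ and $(q,p,d)\ne(2,\infty,2)$. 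The endpoint $(q,p)=(2,\tfrac{2d}{d-2})$ in dimensions $d\ge 3$ is precisely the case requiring Keel--Tao's bilinear interpolation rather than naive $TT^\ast$ duality.

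The principal obstacle is the dispersive estimate itself: the transport equations must be solved and the parametrix remainder controlled uniformly in $h\in(0,1]$ so that the prefactor $(2\pi h)^{-d}$ combined with the stationary-phase gain reproduces exactly the Euclidean decay $|t|^{-d/2}$. The admissible constant $\beta$ depends only on geometric invariants of $(M,g)$ such as the injectivity radius and uniform curvature bounds; fixing it amounts to quantifying the time of smooth solvability of the eikonal equation, equivalently the time at which the geodesic flow on $M$ first develops caustics.
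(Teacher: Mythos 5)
Your proposal is correct and is essentially the paper's own argument: the paper simply cites Burq--G\'erard--Tzvetkov, Proposition 2.9, whose proof is exactly your route, namely a semiclassical WKB parametrix giving the localized dispersive bound $\norm{e^{\im t\Delta_g}\Loc}_{\mathcal{L}(L^1,L^\infty)}\lesssim \vert t\vert^{-d/2}$ on the window $\vert t\vert \lesssim h$, followed by the Keel--Tao theorem applied to the truncated, spectrally localized group $U(t)=e^{\im t\Delta_g}\tilde{\varphi}(h^2\Delta_g)\mathbf{1}_J(t)$. One bookkeeping caveat: the eikonal equation must be posed in the rescaled time $s=t/h$ (equivalently $\partial_t\phi+h^{-1}p(x,\nabla_x\phi)=0$), which is what produces the Hessian scaling $D_\xi^2\phi\eqsim t/h$ that you invoke in the stationary-phase step; with the eikonal as you literally wrote it the Hessian would only be of size $t$, and the kernel bound would degrade by a factor $h^{-d/2}$, ruining uniformity in $h$.
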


\begin{proof}
	See \cite{Burq}, Proposition 2.9. The result follows from the dispersive estimate for the Schr\"odinger group from \cite{Burq}, Lemma 2.5, and an application of Keel-Tao's Theorem (\cite{keelTao}) with $U(t)=\group \LocTilde \mathbf{1}_{J}(t)$ for some $\tilde{\varphi}\in C_c^\infty(\R)$ with $\tilde{\varphi}=1$ on $\operatorname{supp}(\varphi).$
\end{proof}

A similar result also holds for convolutions with the Schr\"odinger group.

\begin{Lemma}\label{inhomogenousLocalizedStrichartz}
	Let $M$ be a compact riemannian manifold of dimension $d\ge 1$ and $p_1,p_2,q_1,q_2 \in [2,\infty]$ with
	\begin{align*}
		\frac{2}{q_i}+\frac{d}{p_i}=\frac{d}{2},\qquad (q_i,p_i,d)\neq(2,\infty,2).
	\end{align*}
	For any $\varphi\in C_c^\infty(\R)$, there is $\beta>0$ and $C>0$ such that for $h\in(0,1]$ and any interval $J$ of length $\vert J\vert\le \frac{\beta h}{2}$
	\begin{align*}
		\bigNorm{t\mapsto \int_{-\infty}^{t} \groupTS\Loc f(s)\df s}_{L^{q_1}(J,L^{p_1})} \le C \norm{\Loc f}_{L^{q_2'}(J,L^{p_2'})}
	\end{align*}
\end{Lemma}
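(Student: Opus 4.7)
The plan is to reduce the inhomogeneous estimate to the full strength of Keel--Tao's abstract theorem \cite{keelTao}, using exactly the same operator family $U(t)$ that appeared in the proof of the homogeneous Lemma \ref{homogenousStrichartzLemma}. Concretely, fix $\tilde{\varphi}\in C_c^\infty(\R)$ with $\tilde{\varphi}=1$ on $\operatorname{supp}(\varphi)$, so that $\tilde{\varphi}^2\varphi=\varphi$, and define
\begin{equation*}
U(t):=e^{\im t\Delta_g}\tilde{\varphi}(h^2\Delta_g)\,\mathbf{1}_{J}(t),\qquad t\in\R.
\end{equation*}
Since $\Delta_g$ is selfadjoint and $\tilde{\varphi}$ is real-valued, the adjoint acts as $U(s)^*=e^{-\im s\Delta_g}\tilde{\varphi}(h^2\Delta_g)\mathbf{1}_J(s)$, and hence
\begin{equation*}
U(t)U(s)^*=e^{\im(t-s)\Delta_g}\tilde{\varphi}(h^2\Delta_g)^2\,\mathbf{1}_J(t)\mathbf{1}_J(s).
\end{equation*}

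Next I would verify the two hypotheses required by Keel--Tao. The energy bound $\norm{U(t)}_{\mathcal{L}(L^2)}\lesssim 1$ follows from the unitarity of the Schr\"odinger group and the Borel functional calculus applied to $\tilde{\varphi}(h^2\Delta_g)$. The dispersive estimate, which requires
\begin{equation*}
\norm{U(t)U(s)^*}_{\mathcal{L}(L^1,L^\infty)}\lesssim |t-s|^{-d/2}
\end{equation*}
for all $t,s$ with $t,s\in J$, reduces to showing
$\norm{e^{\im \tau\Delta_g}\tilde{\varphi}(h^2\Delta_g)^2}_{\mathcal{L}(L^1,L^\infty)}\lesssim |\tau|^{-d/2}$ for $|\tau|\le\beta h$. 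This is precisely Lemma 2.5 from \cite{Burq}, where the constraint $|\tau|\le\beta h$ dictates the scaling $|J|\le\frac{\beta h}{2}$: if $t,s\in J$ then $|t-s|\le|J|\le\frac{\beta h}{2}\le\beta h$.

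With both hypotheses in place, the retarded (inhomogeneous) part of the Keel--Tao theorem yields, for any two pairs $(q_i,p_i)$ satisfying the sharp admissibility condition $\tfrac{2}{q_i}+\tfrac{d}{p_i}=\tfrac{d}{2}$ with $(q_i,p_i,d)\ne(2,\infty,2)$, the estimate
\begin{equation*}
\bigNorm{t\mapsto \int_{s<t}U(t)U(s)^*F(s)\,\df s}_{L^{q_1}(\R,L^{p_1})}\lesssim \norm{F}_{L^{q_2'}(\R,L^{p_2'})}.
\end{equation*}
Applying this with $F(s):=\varphi(h^2\Delta_g)f(s)\mathbf{1}_J(s)$ and using the identity $\tilde{\varphi}(h^2\Delta_g)^2\varphi(h^2\Delta_g)=\varphi(h^2\Delta_g)$ to rewrite $U(t)U(s)^* F(s)=e^{\im(t-s)\Delta_g}\varphi(h^2\Delta_g)f(s)\mathbf{1}_J(t)\mathbf{1}_J(s)$, the left-hand side becomes exactly the desired truncated Duhamel operator restricted to $t\in J$, while the right-hand side equals $\norm{\varphi(h^2\Delta_g)f}_{L^{q_2'}(J,L^{p_2'})}$.

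The only real obstacle is to make sure the abstract Keel--Tao framework applies uniformly in $h\in(0,1]$ with constants that do not blow up as $h\to 0$; this is automatic once the dispersive estimate is known on the correct window $|t-s|\le\beta h$, since Keel--Tao's argument is scale-invariant in the dispersive exponent. All remaining points (adaptation to the retarded kernel $\mathbf{1}_{s<t}$, interchange of the $L^{q_2'}(J)$ and $L^{q_2'}(\R)$ norms via the indicator $\mathbf{1}_J$, and the observation $\tilde{\varphi}^2\varphi=\varphi$) are essentially bookkeeping and reproduce the structure of the proof of Lemma \ref{homogenousStrichartzLemma}.
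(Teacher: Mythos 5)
Your proof is correct and is essentially the argument behind the paper's proof, which simply cites \cite{Burq}, Lemma 3.4: the cited proof (and the paper's own sketch for the homogeneous Lemma \ref{homogenousStrichartzLemma}) rests on exactly your device of applying Keel--Tao's theorem, including its retarded estimates, to the truncated family $U(t)=e^{\im t\Delta_g}\tilde{\varphi}(h^2\Delta_g)\mathbf{1}_J(t)$ together with the dispersive bound of \cite{Burq}, Lemma 2.5 on the window $\vert t-s\vert\lesssim\beta h$. The only point worth noting is that, as your application with $F(s)=\Loc f(s)\mathbf{1}_J(s)$ makes explicit, the stated inequality is to be read with the Duhamel integral taken over $s\in J$ (equivalently, for $f$ supported in time in $J$), which is exactly how it is used in the proof of Proposition \ref{controlHighNorms}.
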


\begin{proof}
	See \cite{Burq}, Lemma 3.4.
\end{proof}

To prepare the next Lemma, we recall the following notation.

\begin{Notation}
	Let $E$ be a separable Banach space, $p\in [1,\infty),$ $J\subset [0,\infty)$ an interval and $\left(\Omega, \F, \Prob,\Filtration \right)$ a filtered probability space. By $\mathcal{M}^p(J,X),$ we denote the space of $\Filtration$-progressively measurable $E$-valued processes $\xi: J\times \Omega \to E$ with $\norm{\xi}_{L^p(J\times \Omega,E)}<\infty.$
\end{Notation}

Adapting the proof of Theorem 3.10 in \cite{BrzezniakStrichartz} to the present situation, we obtain a spectrally localized stochastic Strichartz estimate for stochastic convolutions with the Schr\"odinger group.
\begin{Lemma}\label{stochasticStrichartzLemma}
	Let $\varphi, \tilde{\varphi}\in \realTestNoNull$ with $\tilde{\varphi}=1$ on $\supp(\varphi).$ Choose $\beta>0$ as in Lemma $\ref{homogenousStrichartzLemma}.$ Let $h\in (0,1]$ and $J\subset [0,T]$ be an interval of length $\vert J\vert\le \beta h$ and $\chi_h\in \realTest$ with 
	$\supp(\chi_h)\subset J.$
	For $B\in \mathcal{M}^2(J,\HS(Y,L^2)),$ we set
	\begin{align*}
		G(t):=\int_{0}^t \groupTS \chi_h(s) \Loc B(s)\df W(s),\qquad t\in J.
	\end{align*}
	Then,
	\begin{align*}
		\norm{G}_{L^2(\Omega,\JendpointSpace)}\lesssim \norm{\LocTilde B}_{L^2(\Omega,L^2(J,\HS(Y,L^2)))}.
	\end{align*}	
\end{Lemma}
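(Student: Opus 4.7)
The plan is to establish this spectrally localized stochastic Strichartz estimate by combining a vector-valued Burkholder-Davis-Gundy inequality, Minkowski's inequality and the deterministic homogeneous Strichartz estimate from Lemma \ref{homogenousStrichartzLemma}, adapting the scheme of \cite{BrzezniakStrichartz} to the endpoint pair $(q,p)=(2,6)$ in dimension $d=3$. The starting point is the observation that, for each fixed $t \in J$, $G(t)$ is an $L^6(M)$-valued It\^{o} integral of an $\HS(Y,\Lzwei)$-valued integrand. Since $L^6(M)$ is of type $2$, an application of the vector-valued Burkholder-Davis-Gundy inequality (in the spirit of Theorem~3.10 of \cite{BrzezniakStrichartz}) yields
\begin{equation*}
\E \norm{G(t)}_{L^6}^2 \lesssim \E \bigNorm{\Big( \int_0^t \chi_h(s)^2 \sum_m \vert \groupTS \Loc B(s) f_m \vert^2 \, ds \Big)^{1/2}}_{L^6}^2,
\end{equation*}
where $(f_m)_{m\in\N}$ denotes an orthonormal basis of $Y$.

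The next step is to disentangle the $s$- and $m$-sums from the spatial $L^6$-norm. Using the identity $\norm{g^{1/2}}_{L^6}^2 = \norm{g}_{L^3}$ and Minkowski's integral inequality in $L^3(M)$, followed by another Minkowski inequality for the $\ell^2$-sum over $m$, the right-hand side is dominated by
\begin{equation*}
\E \int_0^t \chi_h(s)^2 \sum_m \norm{\groupTS \Loc B(s) f_m}_{L^6}^2 \, ds.
\end{equation*}
Integrating in $t \in J = [a_J, b_J]$ and applying Fubini to exchange the $dt$- and $ds$-integrals then yields, using $\supp(\chi_h) \subset J$,
\begin{equation*}
\E \norm{G}_{\JendpointSpace}^2 \lesssim \E \int_J \chi_h(s)^2 \sum_m \bigNorm{\tau \mapsto e^{\im \tau \Delta_g} \Loc B(s) f_m}_{L^2([0, b_J - s], L^6)}^2 \, ds.
\end{equation*}

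To conclude, write $\Loc = \Loc \LocTilde$ and apply the deterministic endpoint Strichartz estimate of Lemma \ref{homogenousStrichartzLemma} with $(q,p) = (2,6)$ on the interval $[0, b_J - s]$ of length $\le \vert J \vert \le \beta h$: this bounds each summand by $C \norm{\LocTilde B(s) f_m}_{\Lzwei}^2$. Summing over $m$ produces the Hilbert-Schmidt norm $\norm{\LocTilde B(s)}_{\HS(Y, \Lzwei)}^2$, and a final integration in $s$ and $\omega$, absorbing $\norm{\chi_h}_{L^\infty(J)}$ into the implicit constant, completes the estimate. The main technical obstacle is the first step: the vector-valued Burkholder-Davis-Gundy inequality in $L^6$ rests on the type-$2$ (UMD) structure of the space and on the identification of the $\gamma$-radonifying norm with the pointwise square function norm, which is the essential input imported from the framework of \cite{BrzezniakStrichartz}. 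Once this is granted, the remainder is a transparent combination of Minkowski's inequality, Fubini's theorem and the spectrally localized endpoint Strichartz estimate.
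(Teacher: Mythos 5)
Your proposal is correct, but it follows a genuinely different route from the paper's. The paper treats $G$ as a \emph{single} stochastic integral with values in the martingale type 2 Banach space $\JendpointSpace$: it sets $F(t,s)=\mathbf{1}_{\{s\le t\}}\groupTS\chi_h(s)\Loc B(s)$, applies the Burkholder--Davis--Gundy inequality once in that function space to get $\E\int_J\norm{F(\cdot,s)}_{\JendpointSpaceGamma}^2\,\df s$, and then, after writing $\Loc=\Loc\LocTilde$, uses the ideal property of $\gamma$-radonifying operators to pull the operator $e^{\im\cdot\Delta_g}\Loc\colon \Lzwei\to\JendpointSpace$ (bounded by Lemma \ref{homogenousStrichartzLemma}) out of the $\gamma$-norm, leaving $\norm{\groupMinusS\chi_h(s)\LocTilde B(s)}_{\gamma(Y,L^2)}\eqsim\norm{\chi_h(s)\LocTilde B(s)}_{\HS(Y,L^2)}$. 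You instead work pointwise in $t$: BDG in $L^6$ in its square-function form, the identity $\norm{g^{1/2}}_{L^6}^2=\norm{g}_{L^3}$ plus Minkowski to extract the $\df s$-integral and the $m$-sum, then Tonelli in $t$, and finally the deterministic endpoint estimate applied for each fixed $s$ and $m$ on the shifted interval of length at most $\vert J\vert\le\beta h$. Your route is more elementary in that it avoids $\gamma$-norms of operators into the path space $L^2(J,L^6)$ and only needs the classical square-function characterization in $L^6$, but it hinges crucially on the fact that the endpoint time exponent is $q=2$, which is what lets you interchange $L^2_t$ and $L^2(\Omega)$ by Fubini--Tonelli; the paper's argument (adapted from Brze\'zniak--Millet) is organized so that it extends to non-endpoint admissible pairs $q>2$, where the pointwise-in-$t$ strategy would no longer go through directly. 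Both proofs use the same two essential inputs (vector-valued BDG in a type 2 $L^q$-setting and Lemma \ref{homogenousStrichartzLemma}), and both absorb $\norm{\chi_h}_{L^\infty}$ into the implicit constant, which is harmless since in the application $\chi_h$ is a rescaled fixed bump function with $h$-independent supremum.
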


\begin{proof}
	We abbreviate
	\begin{align*}
		F(t,s):=\mathbf{1}_{\{s\le t\}} \groupTS \chi_h(s) \Loc B(s),\qquad t,s\in J,
	\end{align*}
	and use the Burkholder-Davis-Gundy-inequality in the martingale type 2 Banach space $\JendpointSpace$ (see for example \cite{BrzezniakConvolutions}) to estimate
	\begin{align}\label{BurkholderStochStrichartz}
		\norm{G}_{L^2(\Omega,\JendpointSpace)}^2=\E \bigNorm{\int_J F(\cdot,s) \df W(s)}_{\JendpointSpace}^2
		\lesssim \E \int_J \norm{F(\cdot,s)}_\JendpointSpaceGamma^2 \df s
	\end{align}
	Writing out the definition of $\gamma(Y,L^2(J,L^6))$ and using  $\Loc= \Loc\LocTilde,$ we get
	\begin{align*}
		\norm{F(\cdot,s)}_\JendpointSpaceGamma^2=&\Etilde \bigNorm{t\mapsto\sumM \gamma_m \mathbf{1}_{\{s\le t\}} \groupTS \chi_h(s) \Loc B(s)f_m}_\JendpointSpace^2\\
		=&\Etilde \bigNorm{t\mapsto \sumM \group \Loc \left[\gamma_m  \groupMinusS \chi_h(s) \LocTilde B(s)f_m\right]}_{L^2(J_{\ge s},L^6)}^2,
	\end{align*}
	where $\left(\gamma_m\right)_{m\in\N}$ is a sequence of i.i.d. $\mathcal{N}(0,1)$-Gaussians on some probability space $\tilde{\Omega}.$
	By Lemma $\ref{homogenousStrichartzLemma},$ the operator $e^{\im \cdot \Delta_g}\Loc$ is bounded from $\Lzwei$ to $\JendpointSpace.$ Hence, we can take it out of the sum and obtain
	\begin{align*}
				\norm{F(\cdot,s)}_\JendpointSpaceGamma^2
				\lesssim&\Etilde \bigNorm{ \sumM \gamma_m \ \groupMinusS \chi_h(s) \LocTilde B(s)f_m}_{L^2}^2\\
				=&\norm{  \groupMinusS \chi_h(s) \LocTilde B(s)}_{\gamma(Y,L^2)}^2
				\eqsim \norm{  \groupMinusS \chi_h(s) \LocTilde B(s)}_{\HS(Y,L^2)}^2\\
				\lesssim& \norm{ \chi_h(s) \LocTilde B(s)}_{\HS(Y,L^2)}^2.
	\end{align*}
	Finally, inserting the last estimate in $\eqref{BurkholderStochStrichartz}$  yields
	\begin{align*}
		\norm{G}_{L^2(\Omega,\JendpointSpace)}^2\lesssim \E \int_J \norm{ \chi_h(s) \LocTilde \LocTilde B(s)}_{\HS(Y,L^2)}^2 \df s\lesssim \norm{\LocTilde B}_{L^2(\Omega,L^2(J,\HS(Y,L^2)))}^2.
	\end{align*}
	The proof of Lemma \ref{stochasticStrichartzLemma} is thus completed.
\end{proof}

\section{Uniqueness}

In the following section, we will prove the pathwise uniqueness of solutions of \eqref{ProblemStratonovich}. A key ingredient for this result is an $L^2_tL^p_x$-estimate for solutions for arbitrary large $p$ with moderate growth of the bound in $p.$

\begin{Prop}\label{controlHighNorms}
	Let $d=3$ and $\alpha\in (1,3].$
	Let $\left(\Omega,\F,\Prob,W,\Filtration,u\right)$ be a martingale solution of $\eqref{ProblemStratonovich}.$ Then, there is a measurable set $\Omega_1\subset \Omega$ with $\Prob(\Omega_1)=1$ such that for all $\omega\in \Omega_1,$ $p\in [6,\infty)$ and intervals $J\subset [0,T],$ we have  $u(\cdot,\omega)\in L^2(J;L^{p}(M))$  with 
	\begin{align*}
	\norm{u(\cdot,\omega)}_{L^2(J,L^p)}\lesssim_\omega\, 1+\left(\vert J\vert p\right)^\frac{1}{2}.
	\end{align*}	
\end{Prop}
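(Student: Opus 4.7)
The plan is to estimate $u$ frequency by frequency through the mild formulation \eqref{MildFormSolution} and then reassemble via the Littlewood--Paley decomposition of Lemma \ref{LittlewoodPaley} together with the Bernstein inequalities of Lemma \ref{Bernstein}. Writing $P_k := \varphi(2^{-k}\Delta_g)$ and $h_k := 2^{-k/2},$ the core a priori estimate I would aim for is
\begin{align}\label{plancoreest}
\norm{P_k u}_{L^2(J,L^6)}^2 \;\lec\; (1+|J|)\, h_k\, C(\omega), \qquad k\in\N,
\end{align}
almost surely, with $C(\omega)$ depending on the a.s.\ finite $\norm{u}_{\LinftyHeins}$ and the finite noise constant from \eqref{emAssumption}. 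This localized $L^2_tL^6_x$ control is the only step where Strichartz theory is actually used; everything else is a combinatorial Littlewood--Paley computation.

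To prove \eqref{plancoreest} for fixed $k,$ I would partition $J$ into $N_k \lec 1+|J|/h_k$ subintervals $I_j$ of length at most $\beta h_k$ (with $\beta$ from Lemma \ref{homogenousStrichartzLemma}), restart \eqref{MildFormSolution} at $t_j := \inf I_j,$ and apply $P_k.$ On each $I_j,$ Lemma \ref{homogenousStrichartzLemma} bounds the linear evolution by $\norm{P_k u(t_j)}_{\Lzwei},$ which Bernstein (b) upgrades to $\lec h_k\norm{u(t_j)}_{\Heins};$ Lemma \ref{inhomogenousLocalizedStrichartz} with a dual Strichartz pair chosen compatibly with Lemma \ref{LemmaEstimateNonlinearity} (for $\alpha\in (1,3]$ the endpoint pair $(2,6/5)$ on the right works when $\alpha=3,$ non-endpoint pairs for $\alpha<3$) controls the Duhamel term and, together with Bernstein (b), produces an extra factor of $h_k$ multiplied by $|I_j|^{1/2}\norm{u}_{\LinftyHeins}^\alpha;$ the stochastic convolution is handled through Lemma \ref{stochasticStrichartzLemma} after smoothing $\mathbf{1}_{I_j}$ into an admissible $\chi_{h_k},$ where Bernstein (b) applied to each $\tilde{\varphi}(2^{-k}\Delta_g)(e_m u)$ and assumption \eqref{emAssumption} give $\norm{P_k Bu}_{\HS(Y,\Lzwei)}^2 \lec h_k^2\norm{u}_{\Heins}^2$ pointwise in time. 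Squaring and summing over the $N_k$ subintervals then produces the dominant linear contribution of size $|J|h_k$ and strictly smaller nonlinear and stochastic contributions of size $|J|h_k^2,$ which is \eqref{plancoreest}.

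With \eqref{plancoreest} in hand, Bernstein (a) with $q=6,\ r=p$ gives $\norm{P_k u}_{L^p} \lec 2^{k(1/4-3/(2p))}\norm{P_k u}_{L^6},$ so that Lemma \ref{LittlewoodPaley} combined with Minkowski's inequality in $\ell^2$ yields
\begin{align*}
\norm{u}_{L^2(J,L^p)}^2 \lec \norm{\psi(\Delta_g)u}_{L^2(J,L^p)}^2 + \sum_{k=1}^\infty 2^{k(1/2-3/p)}\,\norm{P_k u}_{L^2(J,L^6)}^2.
\end{align*}
Substituting \eqref{plancoreest} with $h_k = 2^{-k/2}$ reduces the sum to the geometric series $\sum_{k\ge 1} 2^{-3k/p},$ which is $\eqsim p$ as $p\to\infty,$ while the low-frequency piece is harmless by the smoothing of $\psi(\Delta_g)$ and Sobolev embedding and accounts for the additive constant. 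Hence $\norm{u}_{L^2(J,L^p)}^2 \lec 1+|J|p,$ the claim. The main obstacle is the passage from the $L^2(\Omega)$-bound delivered by Lemma \ref{stochasticStrichartzLemma} to the stated $\omega$-pointwise estimate uniform in both $J$ and $p.$ I would localize with stopping times $\tau_R := \inf\{t\in[0,T]:\norm{u(t)}_{\Heins}\ge R\},$ run the argument in $L^2(\Omega)$ on $[0,\tau_R],$ extract an almost-sure bound on a countable dense family of $(J,p)$ via Chebyshev plus Borel--Cantelli, and then let $R\to\infty$ using $\tau_R\uparrow T$ a.s., which follows from the a.s.\ boundedness of the energy for the martingale solution produced in \cite{ExistencePaper}.
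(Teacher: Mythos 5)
Your strategy is essentially the paper's (time--frequency localization, Lemmas \ref{homogenousStrichartzLemma}, \ref{inhomogenousLocalizedStrichartz}, \ref{stochasticStrichartzLemma}, Bernstein, Littlewood--Paley reassembly), but the quantitative bookkeeping has a genuine gap. Your core localized estimate carries the prefactor $(1+\vert J\vert)h_k$, because when $h_k\gg \vert J\vert$ the (at least one) application of the homogeneous Strichartz estimate contributes $h_k^2\norm{u}_{L^\infty(0,T;\Heins)}^2$ after squaring, which is not $\lesssim \vert J\vert h_k$. Feeding $(1+\vert J\vert)h_k$ into your square-function assembly gives $\norm{u}_{L^2(J,L^p)}^2\lesssim 1+(1+\vert J\vert)p$, i.e.\ $\norm{u}_{L^2(J,L^p)}\lesssim 1+p^{1/2}+(\vert J\vert p)^{1/2}$; the step ``hence $\lesssim 1+\vert J\vert p$'' silently drops the uncoupled $p$-term. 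This loss is fatal for the intended application: in the Yudovich argument of Theorem \ref{Uniqueness3d} with $\alpha=3$ the decisive quantity is $(\varepsilon p_n)^{(\alpha-1)/2}/q_n\eqsim \varepsilon$, which is made small by shrinking $\varepsilon=\vert J\vert$, whereas an extra $p_n^{(\alpha-1)/2}/q_n\eqsim 1$ term cannot be made small, so the iteration no longer closes. The repair is to distinguish the regimes $h_k\gtrsim\vert J\vert$ and $h_k\lesssim\vert J\vert$. Your own interval counting actually yields the sharper prefactor $h_k(h_k+\vert J\vert)$ (since $N_k h_k^2\lesssim h_k^2+\vert J\vert h_k$), and with this the weighted sum is $\sum_k 2^{-3k/p}\left(2^{-k/2}+\vert J\vert\right)\lesssim 1+\vert J\vert p$, which does give the claim; alternatively, follow the paper's threshold $k_0$ with $\vert J\vert\eqsim 2^{-k_0/2}$: for $k<k_0$ avoid Strichartz entirely and use $\norm{\LocK u}_{L^2(J,L^p)}\lesssim 2^{k/4}\norm{\LocK u}_{L^2(J,H^1)}$, which sums to $O(1)$ as in \eqref{MiddleFrequenciesLittlewoodPaleyEstimate}, and for $k\ge k_0$ exploit $h_k\lesssim\vert J\vert$ together with the $\ell^2_k$-summability of $\norm{\LocK u}_{L^2(J^{h_k},H^1)}$ as in \eqref{intervalLengthEstimate} and \eqref{finalEstimateHighNormsLemma}.

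Two further points. First, since you partition $J$ itself, the null set coming from the stochastic Strichartz bound depends on $J$; your patch (countable dense family of $(J,p)$, then monotonicity in $J$ and H\"older in $p$) can be made to work, but the paper's device is cleaner and you should adopt it: for each dyadic $h_k$ fix one $\tfrac{\beta h_k}{4}$-partition of all of $[0,T]$, prove the summed almost sure bound \eqref{stochasticSumEstimateComplete} once per $k$, and for a given $J$ select the covering sub-collection; the exceptional set is then a countable union over $k$ only, and $p$ enters afterwards only through deterministic Bernstein inequalities. Moreover, the stopping-time/Borel--Cantelli layer is unnecessary: Definition \ref{MartingaleSolutionDef} already gives $u\in L^2(\Omega\times[0,T];\Heins)$, so the $L^2(\Omega)$-Strichartz bound has finite expectation and directly produces an a.s.\ finite random constant. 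Second, ``smoothing $\mathbf{1}_{I_j}$ into an admissible $\chi_{h_k}$'' inside the restarted stochastic convolution is not a harmless substitution: replacing the indicator by a smooth cutoff changes the integrand and the difference is not controlled pathwise. The paper avoids this by inserting the smooth time cutoff from the start via It\^o's formula applied to $\chi_{I_j}(t)\Loc u(t)$, at the cost of the $\chi_{I_j}'$-term, which Bernstein's inequality compensates; you should do the same.
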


We remark that this estimate of $L^p$-norms is a substitute for the $L^\infty$-bound for solutions in the $2D$-setting, see \cite{ExistencePaper}, and complements the inequality
\begin{align*}
\norm{u}_{L^2(J,L^p)}\lesssim \vert J\vert^\frac{1}{2}\norm{u}_{L^\infty(J,H^1)}<\infty \qquad \text{a.s.}
\end{align*}
for $p\in [1,6],$ which we get from  Sobolev's embedding and the energy estimate for martingale solutions.
Before we start with the proof, we introduce an equidistant partition of the time interval.

\begin{Notation}\label{notationZerlegung}
	Let $I=[a,b]$ with $0<a<b<\infty.$ For $\rho>0$ and $N:=\lfloor \frac{b-a}{\rho}\rfloor,$ i.e. $N=\max\{n\in \N\colon n\le \frac{b-a}{\rho}\}$, the family $\left(I_j\right)_{j=0}^N$ defined by
	\begin{align*}
	I_j:=& \left[a+j\rho,a+(j+1)\rho\right],\quad j\in \{0,\dots N-1\},\\
	I_N:=& \left[a+N\rho,b\right]
	\end{align*}
	is called \emph{$\rho$-partition of $I$}. Observe 
	\begin{align*}
	\vert I_j\vert\le \rho,\quad j=0,\dots,N,\qquad I=\bigcup_{j=0}^N I_j,\qquad I_j^\circ\cap I_k^\circ=\emptyset,\quad j\neq k. 
	\end{align*}
\end{Notation}

\begin{proof}[Proof of Proposition \ref{controlHighNorms}]
	\emph{Step 1.}
	We choose $\beta>0$ and $h\in (0,1]$ as in Lemma $\ref{homogenousStrichartzLemma}$ and take  a $\frac{\beta h}{4}$-partition $\left(I_j\right)_{j=0}^{N_T}$  of $[0,T]$ in the sense of Notation \ref{notationZerlegung}. Furthermore, we define a cover $\left(I_j'\right)_{j=0}^{N_T}$ of $\left(I_j\right)_{j=0}^{N_T}$ by
	\begin{align*}
	I_j':=\left(I_j+\left[-\frac{\beta h}{8},\frac{\beta h}{8}\right] \right)\cap [0,T],
	\qquad m_j:=\frac{j \beta h}{4}+\frac{\beta h}{8},\qquad j=0,\dots,N_T,
	\end{align*}
	and a sequence $\left(\chi_{I_j}\right)_{j=0}^{N_T}\subset C_c^\infty([0,\infty))$ by $\chi_{I_j}:=\chi\left((\beta h)^{-1}(\cdot-m_j)\right)$ for some $\chi\in \realTest$ with $\chi=1$ on $[-\frac{1}{8},\frac{1}{8}]$ and $\supp(\chi)\subset [-\frac{1}{4},\frac{1}{4}].$ Then, we have
	\begin{align}\label{ZdE}
	\chi_{I_j}=1 \quad \text{on  } I_j,\qquad \supp(\chi_{I_j}) \subset I_j', \quad \norm{\chi_{I_j}'}_{L^\infty(\R)}\le (\beta h)^{-1} \norm{\chi'}_{L^\infty(\R)}.
	\end{align} 		
	We fix $\varphi, \tilde{\varphi}\in \realTestNoNull$ with $\tilde{\varphi}=1$ on $\supp(\varphi).$ 
	In order to localize the solution $u$ spectrally and in time, we set 
	\begin{align*}
	v_{I_j}(t)=\chi_{I_j}(t) \Loc u(t),\qquad j=0,\dots,N_T,
	\end{align*}
	and apply It\^o's formula to $\varPhi_j \in C^{1,2}(I_j'\times \Hminusdrei,\Hminuseins)$ defined by
		\begin{align*}
		\varPhi_j(s,x)=\groupTS\chi_{I_j}(s)\Loc x,\qquad s\in I_j',\quad x\in \Hminusdrei,
		\end{align*}
		to get the representation of $v_{I_j}$ in the mild form 
			\begin{align}\label{mildFormMiddleInterval}
			v_{I_j}(t)=&\int_{\min I_j'}^t \left[-\im \Delta_g \groupTS \chi_{I_j}(s)\Loc u(s)+\groupTS \chi_{I_j}'(s)\Loc u(s)\right]\df s\nonumber\\
			&+\int_{\min I_j'}^t \groupTS \chi_{I_j}(s) \Loc \left[\im \Delta_g u(s)-\im \lambda \vert u(s)\vert^{\alpha-1} u(s)+\mu(u(s))\right] \df s\nonumber\\
			&
			-\im \int_{\min I_j'}^t \groupTS \chi_{I_j}(s) \Loc B u(s)\df W(s)\nonumber\\
			=&\int_{\min I_j'}^t \groupTS \chi_{I_j}'(s)\Loc u(s)\df s\nonumber\\
			&+\int_{\min I_j'}^t \groupTS \chi_{I_j}(s) \Loc \left[-\im\lambda \vert u(s)\vert^{\alpha-1} u(s)+\mu(u(s))\right] \df s\nonumber\\
			&
			-\im \int_{\min I_j'}^t \groupTS \chi_{I_j}(s) \Loc B u(s)\df W(s)
			\end{align}
			for $j=1,\dots, N_T$ in $\Hminusdrei$ almost surely for $t\in I_j'.$ Because of the regularity of each term (recall $\alpha\le 3$), this identity also holds in $\Lzwei.$ 
%
%
%
%
%
%
%
%
%
%
%
%
	Analogously, we get 
	\begin{align}\label{mildFormStartInterval}
	v_{I_0}(t)=&\group v_{I_0}(0)+\int_{0}^t \groupTS \chi_{I_0}'(s)\Loc u(s)\df s\nonumber\\
	&+\int_{0}^t \groupTS \chi_{I_0}(s) \Loc \left[-\im\lambda \vert u(s)\vert^{\alpha-1} u(s)+\mu(u(s))\right] \df s\nonumber\\
	&
	-\im \int_{0}^t \groupTS \chi_{I_0}(s) \Loc B u(s)\df W(s)
	\end{align}
	in $\Lzwei$ almost surely for $t\in I_0'.$
	We abbreviate 
	\begin{align*}
	G_{I_j}(t):=\int_{\min I_j'}^t \groupTS \chi_{I_j}(s) \Loc B u(s)\df W(s)
	\end{align*}
	for $0\le t \in [0,T].$ We use the stochastic Strichartz estimate from Lemma $\ref{stochasticStrichartzLemma},$ the properties of $\left(I_j\right)_{j=0}^{N_T}$ and $\left(I_j'\right)_{j=0}^{N_T}$ and Lemma $\ref{Bernstein}$ b) to estimate
	\begin{align*}
	\E \sum_{j=0}^{{N_T}} \norm{G_{I_j}}_\IjStrichendpointSpace^2
	\lesssim\,& \E \sum_{j=0}^{{N_T}} \int_{I_j'}\norm{\LocTilde B(u(s))}_{\HS(Y,L^2)}^2 \df s\\
	\le& 2 \E \sum_{j=0}^{{N_T}} \int_{I_j}\norm{\LocTilde B(u(s))}_{\HS(Y,L^2)}^2 \df s\\
	=&  2\E  \int_0^T\norm{\LocTilde B(u(s))}_{\HS(Y,L^2)}^2 \df s\\
	\lesssim\,& h^2 \E \int_0^T \norm{\LocTilde B(u(s))}_{\HS(Y,H^1)}^2 \df s.
	\end{align*}
	Since $\LocTilde$ is a bounded operator from $\Heins$ to $\Heins$ and $B$ is bounded from $\Heins$ to $\HS(Y,\Heins)$ by Assumption $\ref{stochasticAssumptions}$, we conclude
	\begin{align*}
	\E \sum_{j=0}^{{N_T}} \norm{G_{I_j}}_\IjStrichendpointSpace^2\lesssim\,& h^2 \E \int_0^T \norm{u(s)}_{H^1}^2 \df s.
	\end{align*}
	Hence, there is $C=C(\omega)$ with $C<\infty$ almost surely such that
	\begin{align}\label{stochasticSumEstimateComplete}
	\sum_{j=0}^{{N_T}} \norm{G_{I_j}}_\IjStrichendpointSpace^2\le h^2 C \qquad \text{a.s.}
	\end{align}
	
	\emph{Step 2.} We fix a path $\omega\in \Omega_h,$ where $\Omega_h$ is the intersection of the full measure sets from $\eqref{mildFormMiddleInterval},$ $\eqref{mildFormStartInterval},$ $\eqref{stochasticSumEstimateComplete}$ and $u_j\in L^\infty(0,T;H^1(M))$ almost surely. In the rest of the argument, we skip the dependence of $\omega$ to keep the notation simple. 
	Let us pick those intervals $J_0,\dots,J_N$ from the partition $(I_j)_{j=0}^{N_T}$ which cover the given interval $J.$ The associated intervals in $(I_j')_{j=0}^N$ will be denoted by $J_0',\dots, J_N'.$ 
	From \eqref{stochasticSumEstimateComplete}, we infer 
	\begin{align}\label{stochasticSumEstimate}
	\sum_{j=0}^{{N}} \norm{G_{J_j}}_\JjStrichendpointSpace^2\le h^2 C.
	\end{align}	
	Applying the homogeneous and inhomogenous Strichartz estimates from  Lemma $\ref{homogenousStrichartzLemma}$ and  $\ref{inhomogenousLocalizedStrichartz}$ in $\eqref{mildFormMiddleInterval}$ and in $\eqref{mildFormStartInterval},$ we
	obtain	
	\begin{align}\label{IjEstimateStart}
	\norm{v_{J_j}}_{L^2(J_j,L^6)}\le \norm{v_{J_j}}_{L^2(J_j',L^6)}\lesssim\,& \norm{\chi_{J_j}'\Loc u}_{L^1(J_j',L^2)}
	+\norm{\chi_{J_j}\Loc \vert u \vert^{\alpha-1} u}_{L^2(J_j',L^{\frac{6}{5}})}\nonumber\\
	&+\norm{\chi_{J_j}\Loc \mu(u)}_{L^1(J_j',L^2)}		
	+\norm{G_{J_j}}_{L^2(J_j',L^6)}
	\end{align}
	for $j=1,\dots,N$ and 
	\begin{align}\label{initialEstimateStart}
	\norm{v_{J_0}}_{L^2(J_0,L^6)}\le \norm{v_{J_0}}_{L^2(J_0',L^6)}\lesssim\,& \norm{v_{J_0}(\min J_0')}_{L^2}+ \norm{\chi_{J_0}'\Loc u}_{L^1(J_0',L^2)}
	\nonumber\\
	&+\norm{\chi_{J_0}\Loc \vert u \vert^{\alpha-1} u}_{L^2(J_0',L^{\frac{6}{5}})}+\norm{\chi_{J_0}\Loc \mu(u)}_{L^1(J_0',L^2)}
	\nonumber\\
	&+\norm{G_{J_0}}_{L^2(J_0',L^6)}.
	\end{align}	
	Note that $v_{J_0}(\min J_0')=0$ if $I_0\neq J_0.$ 
	Next, we estimate the terms on the right hand side of $\eqref{IjEstimateStart}$ and $\eqref{initialEstimateStart}.$
	By $\eqref{ZdE},$ Lemma $\ref{Bernstein}$ b) and H\"older's inequality, we get
	\begin{align*}
	\norm{\chi_{J_j}'\Loc u}_{L^1(J_j',L^2)}
	&\lesssim\,  h^{-1} \norm{\Loc u}_{L^1(J_j',L^2)} 
	\lesssim\,   \norm{\Loc u}_{L^1(J_j',H^1)}\nonumber\\
	&\lesssim\, h^\frac{1}{2}  \norm{\Loc u}_{L^2(J_j',H^1)}.
	\end{align*}	
	H\"older's inequality with $\vert J_j'\vert \lesssim h,$ Lemma $\ref{Bernstein}$ b) and the boundedness of the operators $\Loc$ and $\mu$ in $\Heins$ yield
	\begin{align*}
	\norm{\chi_{J_j}\Loc \mu(u)}_{L^1(J_j',L^2)}
	\lesssim\,& h \norm{\chi_{J_j}\Loc \mu(u)}_{L^\infty(J_j',L^2)}
	\le  h \norm{\Loc \mu(u)}_{L^\infty(0,T;L^2)}\\
	\lesssim\,& h^2 \norm{\Loc \mu(u)}_{L^\infty(0,T;H^1)}
	\lesssim\,  h^2 \norm{u}_{L^\infty(0,T;H^1)}.
	\end{align*}
	We apply Lemma $\ref{LemmaEstimateNonlinearity}$ with $r'=\frac{6}{\alpha+2}\ge \frac{6}{5}$ and $q=6$ and obtain the estimate 
	\begin{align*}
	\norm{\vert v \vert^{\alpha-1} v}_{H^{1,\frac{6}{5}}}
	\lesssim\, \norm{\vert v \vert^{\alpha-1} v}_{H^{1,\frac{6}{\alpha+2}}}
	\lesssim\, \norm{v}_{H^1}^{\alpha},\qquad v\in\Heins,
	\end{align*}
	where we used $\alpha\le 3.$
	Together with H\"older's inequality, Lemma $\ref{Bernstein}$ b) and the boundedness of $\Loc$, this implies
	\begin{align*}
	\norm{\chi_{J_j}\Loc \vert u \vert^{\alpha-1} u}_{L^2(J_j',L^{\frac{6}{5}})}
	&\lesssim\, h^\frac{1}{2}\norm{\Loc \vert u \vert^{\alpha-1} u}_{L^\infty(0,T;L^{\frac{6}{5}})}\\
	&\lesssim\, h^\frac{3}{2}\norm{\Loc \vert u \vert^{\alpha-1} u}_{L^\infty(0,T;H^{1,\frac{6}{5}})}\\
	&
	\lesssim\, h^\frac{3}{2}\norm{ \vert u \vert^{\alpha-1} u}_{L^\infty(0,T;H^{1,\frac{6}{5}})}\lesssim\, h^\frac{3}{2}\norm{ u}_{L^\infty(0,T;H^{1})}^\alpha.
	\end{align*}		
	Inserting the last three estimates in $\eqref{IjEstimateStart}$ and $\eqref{initialEstimateStart}$ yields
	\begin{align}\label{IjEstimate}
	\norm{v_{J_j}}_{L^2(J_j,L^6)}
	\lesssim\,& h^\frac{1}{2} \norm{\Loc u}_{L^2(J_j',H^1)}
	+ h^{\frac{3}{2}} \norm{u}_{L^\infty(0,T;H^1)}^\alpha\nonumber\\
	&+h^2 \norm{u}_{L^\infty(0,T;H^1)}
	+\norm{G_{J_j}}_{L^2(J_j',L^6)},
	\end{align}	
	\begin{align}\label{initialEstimate}
	\norm{v_{J_0}}_{L^2(J_0,L^6)}
	\lesssim\,&  h\norm{\Loc u(\min J_0')}_{H^1}+
	h^\frac{1}{2} \norm{\Loc u}_{L^2(J_0',H^1)}
	+ h^{\frac{3}{2}} \norm{u}_{L^\infty(0,T;H^1)}^\alpha\nonumber\\
	&+h^2 \norm{u}_{L^\infty(0,T;H^1)}
	+\norm{G_{J_0}}_{L^2(J_0',L^6)}.
	\end{align}	
	 We square the estimates $\eqref{IjEstimate}$ and $\eqref{initialEstimate}$ and sum them up.  Using $\chi_{J_j}=1$ on $J_j,$ $\eqref{stochasticSumEstimate}$  and $N\le N_T=\left\lfloor \frac{4 T}{\beta h}\right\rfloor,$ we conclude 
	\begin{align}\label{temporalSum}
	\norm{\Loc u}_{L^{2}(J,L^6)}^{2}\le&\sum_{j=0}^N  \norm{\chi_{J_j}\Loc u}_{L^{2}(J_j,L^6)}^{2}=\sum_{j=0}^N  \norm{v_{J_j}}_{L^{2}(J_j,L^6)}^{2}\nonumber\\
	\lesssim\,&h^{2}\norm{\Loc u(\min J_0')}_{H^1}^{2}\nonumber\\
	&+\sum_{j=0}^N \left[h \norm{\Loc u}_{L^2(J_j',H^1)}^{2}
	+ h^{3} \norm{u}_{L^\infty(0,T;H^1)}^{2\alpha }\right]\nonumber\\
	&+\sum_{j=0}^N \left[h^{4} \norm{u}_{L^\infty(0,T;H^1)}^{2}\right]
	+h^{2} C	\nonumber\\
	\lesssim\,& h^{2}\norm{\Loc u(\min J_0')}_{H^1}^{2}
	+h \sum_{j=0}^N  \norm{\Loc u}_{L^2(J_j',H^1)}^{2}
	\nonumber\\&+ h^{2} \norm{u}_{L^\infty(0,T;H^1)}^{2\alpha}
	+h^{3} \norm{u}_{L^\infty(0,T;H^1)}^{2}
	+h^{2} C.	
	\end{align}
	Below, we will use the notations
	\begin{align*}
	J_{N+1}:=\left(\bigcup_{j=0}^N J_j'\right)\setminus \left(\bigcup_{j=0}^N J_j\right),\qquad J^h:=\bigcup_{j=0}^{N+1} J_j.
	\end{align*}
	By
	\begin{align*}
	\sum_{j=0}^N  \norm{\Loc u}_{L^2(J_j',H^1)}^{2}&\le 2\sum_{j=0}^{N+1}  \norm{\Loc u}_{L^2(J_j,H^1)}^2=2 \norm{\Loc u}_{L^2(J^h,H^1)}^{2}
	\end{align*}
	we obtain
	\begin{align*}
	\norm{\Loc u}_{L^{2}(J,L^6)}^{2}
	\lesssim\,& h^{2}\norm{\Loc u(\min J_0')}_{H^1}^{2}
	+h \norm{\Loc u}_{L^2(J^h,H^1)}^{2} \nonumber\\
	&+ h^{2} \norm{u}_{L^\infty(0,T;H^1)}^{2 \alpha}
	+h^{3} \norm{u}_{L^\infty(0,T;H^1)}^2
	+h^2 C.
	\end{align*}
	Let $p\ge 6.$ Then, Lemma $\ref{Bernstein}$ a) and $u\in L^\infty(0,T;\Heins)$ imply
	\begin{align}\label{localLpEstimate}
	\norm{\Loc u}_{L^{2}(J,L^p)}&\lesssim\, h^{3\left(\frac{1}{p}-\frac{1}{6}\right)}\norm{\Loc u}_{L^{2}(J,L^6)}\nonumber\\
	&\lesssim\, h^{\frac{3}{p}+\frac{1}{2}}\norm{\Loc u(\min J_0')}_{H^1}
	+h^{\frac{3}{p}} \norm{\Loc u}_{L^2(J^h,H^1)}\nonumber\\
	&\hspace{0.5cm}
	+ h^{\frac{3}{p}+\frac{1}{2}} \norm{u}_{L^\infty(0,T;H^1)}^\alpha
	+h^{\frac{3}{p}+1} \norm{u}_{L^\infty(0,T;H^1)}
	+h^{\frac{3}{p}+\frac{1}{2}} C	\nonumber\\
	&\lesssim\,
	h^{\frac{3}{p}+\frac{1}{2}}+h^{\frac{3}{p}} \norm{\Loc u}_{L^2(J^h,H^1)}
	+ h^{\frac{3}{p}+\frac{1}{2}} 
	+h^{\frac{3}{p}+1}. 
	\end{align}
	
	\emph{Step 3.} In the last step, we use  $\eqref{localLpEstimate}$ and Littlewood-Paley theory to derive the estimate stated in the Proposition.
	To this end, we set $h_k:=2^{-\frac{k}{2}}$ and $k_0:=\min \left\{k: \vert J\vert > \frac{\beta h_{k}}{4}\right\}.$
	Let us define $\Omega_1:=\bigcap_{k=1}^\infty \Omega_{h_k}$ and fix a path $\omega\in \Omega_1.$ We remark that we have $\Prob(\Omega_1)=1$ by the choice of $\Omega_h$ for each $h\in (0,1]$ from the previous step.  In the rest of the argument, we skip the dependence of $\omega$ to keep the notation simple.
	 Moreover, we choose $\psi \in \realTest,$ $\varphi\in \realTestNoNull$ such that
	\begin{align*}
	1=\psi(\lambda)u+\sum_{k=1}^\infty \varphi(2^{-k}\lambda) ,\qquad \lambda\in\R.
	\end{align*}
	Then, Lemma $\ref{LittlewoodPaley},$ the embedding $\ell^1(\N)\hookrightarrow \ell^2(\N)$ and $\eqref{localLpEstimate}$ imply 
	\begin{align}\label{finalEstimateHighNormsLemma}
	\norm{u}_{L^{2}(J,L^p)}
	\lesssim\, & \bigNorm{\left(\norm{\psi(\Delta_g)u}_{L^p}^2+\sum_{k=1}^\infty \norm{\LocK u}_{L^p}^2\right)^\frac{1}{2}}_{L^{2}(J)}\nonumber\\
	=\,& \left(\norm{\psi(\Delta_g)u}_{L^2(J,L^p)}^2+\sum_{k=1}^\infty \norm{\LocK u}_{L^2(J,L^p)}^2\right)^\frac{1}{2}\nonumber\\	
	\le\,& \norm{\psi(\Delta_g)u}_{L^2(J,L^p)}+\sum_{k=1}^\infty \norm{\LocK u}_{L^2(J,L^p)}\nonumber\\	
	\lesssim\,& \norm{\psi(\Delta_g)u}_{L^{2}(J,L^p)}+\sum_{k=1}^{k_0-1} \norm{\LocK u}_{L^{2}(J,L^p)}\nonumber\\
	&+\sum_{k=k_0}^\infty  
	2^{-\frac{3k}{2 p}} \norm{\LocK u}_{L^2(J^{h_k},H^1)}
	+\sum_{k=k_0}^\infty  \left[2^{-\frac{k}{2}\left(\frac{3}{p}+\frac{1}{2}\right)} 
	+2^{-\frac{k}{2}\left(\frac{3}{p}+1\right)} 
	+2^{-\frac{k}{2} \left(\frac{3}{p}+\frac{1}{2}\right)} \right]
	\nonumber\\
	\le& \norm{\psi(\Delta_g)u}_{L^{2}(J,L^p)}+\sum_{k=1}^{k_0-1} \norm{\LocK u}_{L^{2}(J,L^p)}\nonumber\\
	&+\sum_{k=k_0}^\infty  
	2^{-\frac{3k}{2 p}} \norm{\LocK u}_{L^2(J^{h_k},H^1)}+\sum_{k=k_0}^\infty  \left[2^{-\frac{k}{4}} 
	+2^{-\frac{k}{2}} 
	+2^{-\frac{k}{4} } \right]
	\nonumber\\
	\lesssim\,& \norm{\psi(\Delta_g)u}_{L^{2}(J,L^p)}+\sum_{k=1}^{k_0-1} \norm{\LocK u}_{L^{2}(J,L^p)}\nonumber\\
	&+ \left(\sum_{k=k_0}^\infty  
	2^{-\frac{3k}{p}} \right)^\frac{1}{2} 
	\left(\sum_{k=k_0}^\infty  
	\norm{\LocK u}_{L^2(J^{h_k},H^1)}^2\right)^\frac{1}{2}+1.
	\end{align}
	From Lemma $\ref{Bernstein}$ a) with $h=1,$ we conclude
	\begin{align}\label{inhomLittlewoodPaleyEstimate}
	\norm{\psi(\Delta_g)u}_{L^{2}(J,L^p)}
	\lesssim\, \norm{\psi(\Delta_g)u}_{L^{2}(J,L^2)}
	\lesssim\, \norm{u}_{L^{2}(J,L^2)}
	\lesssim\, 1.
	\end{align}
	From  Lemma $\ref{Bernstein}$ a) and the Sobolev embedding, we infer
	\begin{align*}
	\norm{\LocK u}_{L^{2}(J,L^p)}&\lesssim  2^{-k(\frac{3}{2 p}-\frac{1}{4})} \norm{\LocK u}_{L^{2}(J,L^6)}\\
	&\lesssim  2^{\frac{k}{4}} \norm{\LocK u}_{L^{2}(J,H^1)}
	\end{align*}
	for $k\in \left\{1,\dots,k_0-1\right\}.$ From the definition of $k_0,$ we have $\vert J\vert \eqsim 2^{-\frac{k_0}{2}}.$ Thus, we get  
	\begin{align}\label{MiddleFrequenciesLittlewoodPaleyEstimate}
	\sum_{k=1}^{k_0-1} \norm{\LocK u}_{L^{2}(J,L^p)}
	&\lesssim  \left(\sum_{k=1}^{k_0-1} 2^{\frac{k}{2}}\right)^\frac{1}{2} \left(\sum_{k=1}^{k_0-1}\norm{\LocK u}_{L^{2}(J,H^1)}^2\right)^\frac{1}{2}\nonumber\\
	&\lesssim  2^{\frac{k_0}{4}} \norm{u}_{L^{2}(J,H^1)}
	\lesssim  \vert J\vert^{-\frac{1}{2}} \vert J\vert^{\frac{1}{2}}\lesssim 1.
	\end{align}
	We proceed with the estimate of the sums over $k\ge k_0.$ The fact that we have $J^{h_{k+1}}\subset J^{h_{k}}$ for all $k\in\N,$ leads to 
	\begin{align*}
	\sum_{k=k_0}^\infty  
	\norm{\LocK u}_{L^2(J^{h_k},H^1)}^2
	&=\sum_{k: \vert J\vert > \frac{\beta h_k}{4}} \norm{\LocK u}_{L^2(J^{h_k},H^1)}^2\nonumber\\
	&\le \sum_{k: \vert J\vert > \frac{\beta h_k}{4}} \norm{\LocK u}_{L^2(J^{h_{k_0}},H^1)}^2\nonumber\\
	&\lesssim \norm{u}_{L^2(J^{h_{k_0}},H^1)}^2\le \vert J^{h_{k_0}}\vert\,\norm{u}_{L^\infty(0,T;H^1)}^2.
	\end{align*}
	Using $\vert J^{h_{k_0}}\vert \le 3 \frac{\beta h_{k_0}}{4}+\vert J\vert\le 4\vert J\vert$
	and $u\in L^\infty(0,T;\Heins)$ almost surely, we obtain
	\begin{align}\label{intervalLengthEstimate}
	\sum_{k=k_0}^\infty 
	\norm{\LocK u}_{L^2(J^{h_k},H^1)}^2\lesssim \vert J\vert.
	\end{align}
	Finally, the calculation 	
	\begin{align*}
	\lim_{p\to \infty}\frac{1}{p}\sum_{k=1}^\infty  
	2^{-\frac{3k}{p}}=\lim_{p\to \infty}\frac{1}{p}\left(\frac{1}{1-2^{-\frac{3}{p}}}-1\right)=\lim_{p\to \infty}\frac{1}{p\left(2^{\frac{3}{p}}-1\right)}=\frac{1}{3\log(2)}
	\end{align*}
	yields the boundedness of the function defined by $[6,\infty)\ni p\mapsto \frac{1}{p}\sum_{k=1}^\infty  
	2^{-\frac{3k}{p}}$
	and hence, 
	\begin{align}\label{linearGrowthGeometricSum}
	\sum_{k=1}^\infty  
	2^{-\frac{3k}{p}}\lesssim\, p.
	\end{align}
	Using the estimates  $\eqref{inhomLittlewoodPaleyEstimate}$ $\eqref{MiddleFrequenciesLittlewoodPaleyEstimate},$ $\eqref{intervalLengthEstimate},$ and $\eqref{linearGrowthGeometricSum}$ in $\eqref{finalEstimateHighNormsLemma}$, we get 	
	\begin{align*}
	\norm{u}_{L^{2}(J,L^p)}\lesssim\, 1+\left(\vert J\vert p\right)^\frac{1}{2},\qquad p\in[6,\infty),
	\end{align*}
	which implies the assertion. The proof of Proposition \ref{controlHighNorms} is thus completed.	
\end{proof}

We would like to continue with some remarks on seemingly natural extensions of the previous result to higher dimensions, nonlinear noise and  non-compact manifolds.

\begin{Remark}
	We would like to comment on the case of higher  dimensions $d\ge 4.$ The Strichartz-endpoint is $(2,\frac{2d}{d-2})$ and the use of Lemma \ref{LemmaEstimateNonlinearity} leads to the restriction $\alpha\le 1+\frac{2}{d-2}.$ The corresponding estimate in $\eqref{finalEstimateHighNormsLemma}$ has to be replaced by
	\begin{align*}
	\norm{u}_{L^{2}(J,L^p)}
	\lesssim& \norm{\psi(\Delta_g)u}_{L^{2}(J,L^p)}+\sum_{k=1}^{k_0-1} \norm{\LocK u}_{L^{2}(J,L^p)}+\sum_{k=k_0}^\infty  
	2^{-\frac{k}{2}\left(\frac{d}{p}-\nu(d)\right)} \norm{\LocK u}_{L^2(J,H^1)}\nonumber\\
	&+\sum_{k=k_0}^\infty  \left[2^{-\frac{k}{2}\left(\frac{d}{p}-\nu(d)+\frac{1}{2}\right)} 			
	+2^{-\frac{k}{2}\left(\frac{d}{p}-\nu(d)+1\right)} 
	+2^{-\frac{k}{2} \left(\frac{d}{p}-\nu(d)+\frac{1}{2}\right)} \right]
	\end{align*} 
	for $p\ge \frac{2 d}{d-2},$ where we set $\nu(d):=\frac{d-3}{2}.$ 
	Hence, the convergence of the sums requires an upper bound on p, which destroys the uniqueness proof below such that the case $d\ge 4$ remains an open problem. In fact, this problem occurs since the scaling condition for Strichartz exponents, Sobolev embeddings and Bernstein inequalities are more restrictive in higher dimensions and therefore, the restriction to $d=3$ is of deterministic nature.
\end{Remark}

\begin{Remark}
	In the proof of Proposition \ref{controlHighNorms}, we did not need the optimal estimates for the correction term $\mu$ and the stochastic integral. In fact, it is possible to generalize the argument and show the estimate 
	\begin{align*}
	\norm{u}_{L^2(J,L^p)}\lesssim\, 1+\left(\vert J\vert p\right)^\frac{1}{2}\qquad \text{a.s.},\quad p\in [6,\infty),
	\end{align*}
	 for martingale solutions of the equation 
	\begin{equation}\label{ProblemUniquenessNonlinear}
	\left\{
	\begin{aligned}
	\df u(t)&= \left(\im \Delta_g u(t)-\im \lambda \vert u(t)\vert^{\alpha-1} u(t)+ \mu \left(\vert u(t)\vert^{2(\gamma-1)}u(t)\right) \right) \df t-\im B\left( \vert u(t)\vert^{\gamma-1}u(t)\right) \df W(t),\\
	u(0)&=u_0,
	\end{aligned}\right.
	\end{equation}
	with nonlinear noise of power $\gamma\in [1,2).$ However, we do not know if this equation has a solution, since the existence theory developed in \cite{ExistencePaper} only applies for $\gamma=1.$ Moreover, it is unclear how to apply these estimates in order to prove pathwise uniqueness since the arguments below rely on the linearity of the noise. Hence, the case of equation \eqref{ProblemUniquenessNonlinear} remains another open problem.
\end{Remark}

\begin{Remark}
	Let us comment on the case of possibly non-compact manifolds with bounded geometry. In the two dimensional setting, the Strichartz estimates from \cite{Bernicot} with an additional loss of $\varepsilon$ regularity were sufficient to prove uniqueness, see \cite{ExistencePaper}, Section 7. In fact, these estimates correspond to localized Strichartz estimates of the form
	\begin{align}\label{StrichartzHeat}
	\norm{t\mapsto e^{\im  t \Delta_g}\LocTwo{m}{\frac{1}{2}} x}_{L^q(J,L^p)}\le C_\varepsilon \norm{ x}_{L^2},\qquad \vert J\vert\le \beta_\varepsilon h^{1+\varepsilon},
	\end{align} 
	for all $\varepsilon>0$ and some $C_\varepsilon>0$ and $\beta_\varepsilon>0,$ where we denote $\psi_{m,a}(\lambda):= \lambda^m e^{-a \lambda}$	 for $m\in \N$ and $a>0.$ 
	A continuous version of the Littlewood-Paley inequality which can substitute \eqref{LittlewoodPaleyEstLp} is given by 
	\begin{align}\label{BernicotLittlewoodPaley}
	\norm{f}_{L^p}\eqsim \norm{\varphi_{m,a}(-\Delta_g)f}_{L^p}+\bigNorm{\left(\int_0^1 \vert\LocTwo{m}{a} f\vert^2 \frac{\df h}{h}\right)^\frac{1}{2}}_{L^p},\qquad f\in L^p(M),
	\end{align}
	for $\varphi_{m,a}(\lambda):=\int_{\lambda}^\infty \psi_{m,a}(t)\frac{\df t}{t},$ see \cite{Bernicot}, Theorem 2.8.
	Based on \eqref{StrichartzHeat} and \eqref{BernicotLittlewoodPaley}, we can argue  similarly  as in the proof of Proposition \ref{controlHighNorms}
	and end up with the estimate
	\begin{align*}
	\norm{u}_{L^2(J,L^p)}\lesssim\, 1+\vert J\vert^\frac{1}{2}\left(\frac{p}{6-\varepsilon p}\right)^\frac{1}{2}\qquad \text{a.s.}
	\end{align*}
	for each $\varepsilon>0$  and $p\in [6,6 \varepsilon^{-1})$ with an implicit constant which goes to infinity for $\varepsilon\to 0.$ The upper bound on $p$ is due to the fact that the additional $\varepsilon$ in \eqref{StrichartzHeat} weakens the estimates of the critical term containing the derivative $\chi_j'$ of the temporal cut-off and enlarges the number of summands in \eqref{temporalSum}. As in the case of higher dimensions than $d=3,$ the uniqueness argument breaks down since a limit process $p\to \infty$ is no longer possible.
\end{Remark}

So far, we only used the topological properties of the noise, i.e.
\begin{align*}
	B\in \mathcal{L}\left(\Heins,\HS(Y,\Heins)\right),\qquad \mu\in\mathcal{L}(\Heins).
\end{align*}
 Now, the Stratonovich structure and the symmetry of the operators $B_m$ for $m\in\N$ come into play to prove 
the following representation formula for the $L^2$-distance of two solutions. 
\begin{Lemma}\label{SolutionDifferenceLemma}
	Let $d=3$ and $\alpha\in (1,3].$ Let $\left(\Omega,\F,\Prob,W,\Filtration,u_j\right),$ $j=1,2,$ be solutions of $\eqref{ProblemStratonovich}.$ 
	Then, we have 
	\begin{align}\label{SolutionDifferenceFormula}
	\norm{u_1(t)-u_2(t)}_{L^2}^2=&2 \int_0^t \Real \skpLzwei{u_1(s)-u_2(s)}{-\im \lambda \vert u_1(s)\vert^{\alpha-1} u_1(s)+\im \lambda \vert u_2(s)\vert^{\alpha-1} u_2(s)} \df s
	\end{align}
	almost surely for all $t\in[0,T].$
\end{Lemma}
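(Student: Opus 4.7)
The plan is to derive the identity by applying It\^o's formula to $\norm{w}_{L^2}^2$, where $w:=u_1-u_2$ satisfies the equation obtained by subtracting the two instances of \eqref{Problem}:
\begin{align*}
\df w(t) = \im \Delta_g w(t)\,\df t - \im \lambda\left(\vert u_1(t)\vert^{\alpha-1}u_1(t)-\vert u_2(t)\vert^{\alpha-1}u_2(t)\right)\df t + \mu(w(t))\,\df t - \im B w(t)\,\df W(t).
\end{align*}
Formally, four contributions arise in $\df\norm{w(t)}_{L^2}^2$. The Laplacian produces $2\Real\skpLzwei{w}{\im \Delta_g w}=0$ by self-adjointness of $\Delta_g$. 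The nonlinear term yields the right hand side of \eqref{SolutionDifferenceFormula}. The Stratonovich--It\^o correction contributes $2\Real\skpLzwei{w}{\mu(w)}=-\sumM\norm{e_m w}_{L^2}^2$, since $e_m$ is real and $B_m$ symmetric. Finally, the quadratic variation of the stochastic integral contributes $+\sumM\norm{-\im e_m w}_{L^2}^2=+\sumM\norm{e_m w}_{L^2}^2$, cancelling the previous term exactly. The stochastic integral itself has $\Real\skpLzwei{w}{-\im e_m w}=0$ because $\int_M e_m\vert w\vert^2 \df\mu\in\R$, so it vanishes after taking real parts. Combining these gives \eqref{SolutionDifferenceFormula}.

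The main obstacle is rigorously justifying It\^o's formula for $\norm{\cdot}_{L^2}^2$: the paths of $u_j$ lie in $L^\infty(0,T;\Heins)$ almost surely, but the drift takes values only in a weaker space, since $\im \Delta_g u_j\in \Hminuseins$ and $\vert u_j\vert^{\alpha-1}u_j\in L^{6/5}(M)\hookrightarrow \Hminuseins$ for $\alpha\in(1,3]$ via $\Heins\hookrightarrow L^{2\alpha}(M)$. The standard remedy is to apply It\^o's formula not to $w$ directly, but to the Yosida regularization $w_\lambda:=(I-\lambda\Delta_g)^{-1}w$, which lies in $\Hdrei$ and for which the chain rule in $\Lzwei$ is legitimate. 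Since the resolvent commutes with $\Delta_g$ and is self-adjoint, the Laplacian term still yields $0$; the nonlinear term converges as $\lambda\to 0$ by strong continuity of the resolvent on $\Lzwei$ together with the bound $\norm{\vert u_j\vert^{\alpha-1}u_j}_{L^2}\lesssim\norm{u_j}_{H^1}^{\alpha}$; and the noise terms converge via $(I-\lambda\Delta_g)^{-1}\to I$ strongly on $\Heins$ and the Hilbert--Schmidt estimate from Assumption \ref{stochasticAssumptions}, using dominated convergence in the BDG inequality.

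Passing to the limit $\lambda\to 0$ then establishes \eqref{SolutionDifferenceFormula} pathwise for all $t\in[0,T]$. The crucial structural point is the simultaneous cancellation of the It\^o correction and the contribution of $\mu(w)$; both hinge on the assumed symmetry of $B_m$ in $\Lzwei$ and the reality of $e_m$, which are precisely the features encoded in the Stratonovich formulation and which give rise to the mass-conservation character of the noise.
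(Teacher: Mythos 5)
Your proposal is correct and follows essentially the same route as the paper: a formal It\^o computation for $\norm{u_1-u_2}_{L^2}^2$ in which the Laplacian term vanishes by self-adjointness, the stochastic integral vanishes by the reality of $e_m$ (symmetry of $B_m$), and the It\^o correction cancels exactly against the Stratonovich term $\mu(w)$, with rigor supplied by a Yosida-type regularization $(\lambda(\lambda-\Delta_g)^{-1}$ in the paper, $(I-\lambda\Delta_g)^{-1}$ in your write-up). Your sketch of the limiting argument is in fact slightly more explicit than the paper, which delegates this justification to Proposition 6.5 of \cite{ExistencePaper}.
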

Note that the RHS of \eqref{SolutionDifferenceFormula} only contains the terms induced by the nonlinearity. In particular, the stochastic integral vanishes, which will enable us to use the pathwise estimate from Proposition \ref{controlHighNorms} to prove uniqueness.

\begin{proof}
	We restrict ourselves to a formal argumentation. Similarly to \cite{ExistencePaper}, Proposition 6.5, our reasoning can be rigorously justified by a regularization procedure based on Yosida approximations $\Yosida:=\lambda\left(\lambda-\Delta_g\right)^{-1}$ for $\lambda>0.$ 
	The function $\mass: {L^2(M)} \to \R$ defined by $\mass(v):=\norm{v}_{L^2}^2$ is twice continuously Fr\'{e}chet-differentiable with 
	\begin{align*}
	\mass'[v]h_1&= 2 \Real \skpLzweiMf{v}{ h_1}, \qquad
	\mass''[v] \left[h_1,h_2\right]= 2 \Real \skpLzweiMf{ h_1}{h_2}
	\end{align*}
	for $v, h_1, h_2\in {L^2(M)}.$ We set $w:=u_1-u_2.$
	Then, a formal application of the It\^o formula yields 
		\begin{align}\label{LzweiDifferenceYosida}
		\norm{ w(t)}_{L^2}^2=&2 \int_0^t \Real \skpLzweiMf{w(s)}{\im \Delta_g w(s)-\im\vert u_1(s)\vert^{\alpha-1} u_1(s)+\im\vert u_2(s)\vert^{\alpha-1} u_2(s)} \df s\nonumber\\
		&+2 \int_0^t \Real \skpLzweiMf{w(s)}{\mu(w(s))} \df s- 2 \int_0^t \Real \skpLzweiMf{w(s)}{\im B w(s) \df W(s)}\nonumber\\
		&
		+\sumM \int_0^t   
		\Vert B_m w(s)\Vert_{L^2}^2\df s
		\end{align}
		almost surely for all $t\in[0,T].$ Since $\Delta_g$ is selfadjoint, we get $\Real \skpLzweiMf{w}{\im \Delta_g w}=0.$
		From the symmetry of $B_m,$ $m\in\N,$ we infer $\Real \skpLzweiMf{w}{\im B_m w}=0$ and thus, we obtain
		\begin{align*}
			\int_0^t \Real \skpLzweiMf{w(s)}{\im B w(s) \df W(s)}=0.
		\end{align*}
		 Moreover, we simplify 
		\begin{align*}
		2 \Real \skpLzweiMf{w(s)}{\mu(w(s))}=-\sumM \Real \skpLzweiMf{w(s)}{B_m^2 w(s)}
		=-\sumM \norm{B_m w(s)}_{L^2}^2. 
		\end{align*}
		Therefore, we have
		\begin{align*}
		\norm{ w(t)}_{L^2}^2=&2 \int_0^t \Real \skpLzweiMf{ w(s)}{-\im \vert u_1(s)\vert^{\alpha-1} u_1(s)+\im \vert u_2(s)\vert^{\alpha-1} u_2(s)} \df s
		\end{align*}
		almost surely for all $t\in[0,T].$

\end{proof}


We close with the proof of our main Theorem \ref{Uniqueness3d}. We prove the uniqueness by applying a strategy developed by Yudovich, \cite{yudovich1963non}, for the Euler equation. In the context of the NLS, it was first used by Vladimirov in \cite{Vladimirov}, Ogawa and Ozawa in \cite{Ogawa} and \cite{OgawaOzawa}. They looked at $2D$ domains and used Trudinger type inequalities to control the growth of $L^p$-norms for $p\to \infty.$ A generalization of this argument to the stochastic case in $2D$ is straightforward and can be found in \cite{FhornungDiss}, Subsection 5.2.
Following Burq, G\'erard and Tzvetkov in the case without boundary, the Yudovich-strategy in combination with Strichartz estimates as an improvement of Trudinger's inequality was also applied it to the deterministic NLS on compact $3D$ manifolds with boundary by  Blair, Smith and Sogge in \cite{blairStrichartz}.

\begin{proof}[Proof of Theorem $\ref{Uniqueness3d}$]
	\emph{Step 1.}	
	Let us take two solutions $u_1,u_2\in L^2(\Omega,L^\infty(0,T;H^1(M))).$  Using Proposition \ref{controlHighNorms}, we choose a null set $N_1\in \F$ with
	\begin{align}\label{highNorms}
	\norm{u_j(\cdot,\omega)}_{L^2(J,L^{p})}\lesssim_\omega \,1+\left(\vert J\vert p\right)^\frac{1}{2},\qquad \omega\in \Omega\setminus N_1, 
	\end{align}
	for each interval $J\subset [0,T]$ and $p\ge 6.$
	By Corollary \ref{SolutionDifferenceLemma}, we choose a null set $N_2\in F$ such that
	\begin{align}\label{solutionDifference}
	\norm{u_1(t)-u_2(t)}_{L^2}^2=&2 \int_0^t \Real \skpLzwei{u_1(s)-u_2(s)}{-\im \lambda \vert u_1(s)\vert^{\alpha-1} u_1(s)+\im \lambda \vert u_2(s)\vert^{\alpha-1} u_2(s)} \df s
	\end{align}
	holds on $\Omega\setminus N_2$ for all $t\in [0,T].$
	In particular, this leads to the weak differentiability of the map
	$G:=\norm{u_1-u_2}_{L^2}^2$ on $\Omega\setminus N_2$ and to the estimate
		\begin{align}\label{DifferenceFormulaModelUniqueness2}
			\vert G'(t)\vert=&\left\vert 2 \Real \skpLzwei{u_1(s)-u_2(s)}{-\im\lambda  \vert u_1(s)\vert^{\alpha-1} u_1(s)+\im\lambda  \vert u_2(s)\vert^{\alpha-1} u_2(s)}\right\vert\nonumber\\
			\lesssim& \int_M \vert u_1(s,x)-u_2(s,x)\vert^{2} \left(\vert u_1(s,x)\vert^{\alpha-1}+\vert u_2(s,x)\vert^{\alpha-1}\right) \df x.
		\end{align}
		The Sobolev embedding $H^1(M)\hookrightarrow L^6(M)$ yields $u_j\in L^\infty(0,T;L^6(M))$, $j=1,2$, almost surely. Moreover, we have the mild representation
		\begin{align*}
		\im u_j(t)=&\im e^{\im t\Delta_g}u_0+\int_0^t e^{\im (t-\tau)\Delta_g}\lambda \vert u_j(\tau)\vert^{\alpha-1} u_j(\tau)\df \tau
		+\im \int_0^t e^{\im (t-\tau)\Delta_g}\mu(u_j(\tau))\df \tau\\
		&+\int_0^t e^{\im (t-\tau)\Delta_g}B(u_j(\tau))\df W(\tau)
		\end{align*}
		almost surely for all $t\in [0,T]$ in $H^{-1}(M)$ for $j=1,2.$ As a consequence of $\alpha \in (1,3]$ and $u_j\in L^\infty(0,T;L^6(M)),$ each of the terms on the RHS is in $L^2(M).$ In particular,  we obtain $u_j\in C([0,T],\Lzwei)$, $j=1,2$, almost surely and thus, we can take another null set 	$N_3\in F$ such that
		\begin{align*}
		u_j\in L^\infty(0,T;L^6(M))\cap C([0,T],\Lzwei)\quad \text{on} \quad \Omega\setminus N_3.
		\end{align*}
		Now, we define 
		$\Omega_1:=\Omega\setminus \left(N_1\cup N_2\cup N_3\right)$	
		and fix $\omega\in \Omega_1.$	We take a sequence $\left(p_n\right)_{n\in\N}\in [6,\infty)^\N$ with $p_n\to \infty$ as $n\to \infty.$ 
	We fix $n\in\N$ and define $q_n:=\frac{p_n}{\alpha-1}.$ 
	By the estimate \eqref{DifferenceFormulaModelUniqueness2} and H\"older's inequality with exponents $\frac{1}{q_n'}+\frac{1}{q_n}=1$, we get 
	\begin{align*}
	\vert G'(t)\vert
	\lesssim& \norm{u_1(t)-u_2(t)}_{L^{2{q'_n}}}^2 \bigNorm{\vert u_1(t)\vert^{\alpha-1}+\vert u_2(t)\vert^{\alpha-1}}_{L^{{q_n}}} ,
	\qquad t\in[0,T].
	\end{align*}
	The choice of ${q_n}$ yields $2{q'_n}\in [2,6]$ and for $\theta:=\frac{3}{2{q_n}}\in (0,1),$ we have $\frac{1}{2 {q'_n}}=\frac{1-\theta}{2}+ \frac{\theta}{6}.$ Hence, we obtain
	\begin{align*}
	\norm{u_1-u_2}_{L^{2{q'_n}}}^2
	\le \norm{u_1-u_2}_{L^2}^{2-\frac{3}{{q_n}}} \norm{u_1-u_2}_{L^6}^\frac{3}{{q_n}}
	\le \norm{u_1-u_2}_{L^2}^{2-\frac{3}{{q_n}}} \norm{u_1-u_2}_{L^\infty(0,T;L^6)}^\frac{3}{{q_n}}
	\end{align*} 
	by interpolation. We choose a constant $C_1>0$ such that 
	\begin{align*}
	\norm{u_1}_{L^\infty(0,T;L^6)}+\norm{u_2}_{L^\infty(0,T;L^6)}\le C_1,
	\end{align*}
	which leads to 
	\begin{align}\label{EstimateG}
	\vert G'(t)\vert
	\lesssim&\, C_1^\frac{3}{{q_n}} G(t)^{1-\frac{3}{{2q_n}}}  \left[\norm{u_1(t)}_{L^{{p_n}}}^{\alpha-1}+\norm{u_2(t)}_{L^{{p_n}}}^{\alpha-1}\right].
	\end{align}

	\emph{Step 2.}
	We argue by contradiction and assume that there is $t_2\in [0,T]$ with $G(t_2)>0.$
	By the continuity of $G$, we get
	\begin{align}\label{Assumption}
	\exists t_1\in [0,t_2): G(t_1)=0 \quad\text{and}\quad \forall t\in (t_1,t_2): G(t)>0.
	\end{align}
	We set $J_\varepsilon:= (t_1,t_1+\varepsilon)$ with $\varepsilon\in (0,t_2-t_1)$ to be chosen later. By the weak chain rule (see \cite{gilbargTrudinger}, Theorem 7.8) and $\eqref{EstimateG},$ we get 
	\begin{align*}
	G(t)^\frac{3}{2 {q_n}}=\frac{3}{2 {q_n}} \int_{t_1}^t G'(s)  G(s)^{\frac{3}{2 {q_n}}-1} \df s
	&\lesssim \frac{3}{2 {q_n}} C_1^{\frac{3}{{q_n}}}\int_{t_1}^t \left[\norm{u_1(s)}_{L^{{p_n}}}^{\alpha-1}+\norm{u_2(s)}_{L^{{p_n}}}^{\alpha-1}\right] \df s,\qquad t\in J_\varepsilon.
	\end{align*}
	By another application of the H\"older inequality with exponents $\frac{2}{\alpha-1}$ and $\frac{2}{3-\alpha},$ we infer that
	\begin{align*}
	G(t)^\frac{3}{2 {q_n}}
	&\lesssim \frac{3}{2 {q_n}} C_1^{\frac{3}{{q_n}}}
	\Big[\norm{u_1}_{L^2(t_1,t;L^{p_n})}^{\alpha-1}+\norm{u_2}_{L^2(t_1,t;L^{p_n})}^{\alpha-1}\Big]\varepsilon^{\frac{3-\alpha}{2}},\qquad t\in J_\varepsilon.
	\end{align*}	
	Now, we are in the position to apply \eqref{highNorms} and we obtain
	\begin{align*}
	G(t)^\frac{3}{2 {q_n}}
	&\lesssim \frac{3}{2 {q_n}} C_1^{\frac{3}{{q_n}}}\left(1+(\varepsilon p_n)^\frac{\alpha-1}{2}\right)\varepsilon^{\frac{3-\alpha}{2}},\qquad t\in J_\varepsilon.
	\end{align*}	
	In particular, there is a constant $C>0$ such that for all $t\in J_\varepsilon$ it holds that
	\begin{align}\label{UniquenessBeforeEstimateHighNorms}
	G(t)
	&\le C_1^2  \left(\frac{3 C}{2 {q_n}}\left(1+(\varepsilon (\alpha-1)q_n)^\frac{\alpha-1}{2}\right) \varepsilon^{\frac{3-\alpha}{2}}\right)^\frac{2 {q_n}}{3}\nonumber\\
	&\le C_1^2  \left(\frac{3C }{2 {q_n}}\left(1+\varepsilon^\frac{\alpha-1}{2} (\alpha-1)q_n\right) \varepsilon^{\frac{3-\alpha}{2}}\right)^\frac{2 {q_n}}{3} =:b_n,
	\end{align}
	where we used $p_n:={q_n}(\alpha-1)$ and $\frac{\alpha-1}{2}\in (0,1].$\\

	\emph{Step 3.} 	We aim to show that the sequence $\left(b_n\right)_{n\in \N}$ on the RHS of \eqref{UniquenessBeforeEstimateHighNorms} converges to $0$ for $\varepsilon$ sufficiently small. Then, we have proved $G(t)=0$ for all $t\in J_\varepsilon$ which contradicts $\eqref{Assumption}.$ Hence, we have  $u_1(t)=u_2(t)$ almost surely for all $t\in [0,T].$
	
	To this end, we choose $\varepsilon \in (0,\min\{t_2-t_1,\frac{2}{3C (\alpha-1)}\}) .$ Then, 
	\begin{align*}
	b_n=&C_1^2  \left(\frac{3C }{2 {q_n}}\left(1+\varepsilon^\frac{\alpha-1}{2} (\alpha-1)q_n\right) \varepsilon^{\frac{3-\alpha}{2}}\right)^\frac{2 {q_n}}{3}\\
	=&C_1^2  \left(\frac{3C\varepsilon(\alpha-1) }{2}\right)^\frac{2 {q_n}}{3}\left(\frac{1}{\varepsilon^\frac{\alpha-1}{2} (\alpha-1)q_n}+1\right)^\frac{2 {q_n}}{3}
	\xrightarrow{n\to \infty}0.
	\end{align*}
	The proof of Theorem \ref{Uniqueness3d} is thus completed.
\end{proof}

\section*{Acknowledgement}
We gratefully acknowledge financial support by the
Deutsche
Forschungsgemeinschaft (DFG) through CRC 1173.


\Addresses
\end{document}